\newtheorem{theorem}{Theorem}[section]
\theoremstyle{plain}
\newtheorem{corollary}[theorem]{Corollary}
\newtheorem{definition}{Definition}
\newtheorem{example}{Example}
\newtheorem{lemma}[theorem]{Lemma}
\newtheorem{proposition}[theorem]{Proposition}
\newtheorem{remark}{Remark}
\numberwithin{equation}{section}
\begin{document}
\title[Topological Regular Neighborhoods]{Topological Regular Neighborhoods \thanks{This version of Part I is bare in
spots and short on polish, but experts will find all necessary data.}}
\author{Robert D. Edwards}
\address{Department of Mathematics, University of California, Los Angeles, }
\email{rde@math.ucla.edu}
\dedicatory{\vspace{5.25in}\medskip
\_\_\_\_\_\_\_\_\_\_\_\_\_\_\_\_\_\_\_\_\_\_\_\_\_\_\_\_\_\_\_\_\_\_\_\smallskip
\\Preparation of the electronic manuscript was supported by NSF Grant
DMS-0407583. Final editing was carried out by Fredric Ancel, Craig Guilbault
and Gerard Venema.}
\begin{abstract}
This article is one of three highly influential articles on the topology of
manifolds written by Robert D. Edwards in the 1970's but never published.
Organizers of the Workshops in Geometric Topology
(http://www.uwm.edu/\allowbreak\symbol{126}craigg/workshopgtt.htm) with the
support of the National Science Foundation have facilitated the preparation of
electronic versions of these articles to make them publicly available.
Preparation of the first of those articles \textquotedblleft Suspensions of
homology spheres\textquotedblright\ was completed in 2006. A more complete
introduction to the series can be found in that article, which is posted on
the arXiv at: http://arxiv.org\allowbreak/abs/math/\allowbreak0610573v1 and on
a web page devoted to this project: http://www.\allowbreak uwm.\allowbreak
edu/\allowbreak\symbol{126}craigg/\allowbreak EdwardsManuscripts.htm

Preparation of the second article \textquotedblleft Approximating certain
cell-like maps by homeomorphisms\textquotedblright\ is nearing completion. The
current article \textquotedblleft Topological \ regular
neighborhoods\textquotedblright\ is the third and final article of the series.
(\textbf{Note. }This ordering is not chronological, but rather by relative
readiness of the original manuscripts for publication.) It develops a
comprehensive theory of regular neighborhoods of locally flatly embedded
topological manifolds in high dimensional topological manifolds. The following
orignial abstract for that paper was also published as an AMS research
announcement:\medskip

\noindent\textbf{Original Abstract. }(AMS Notices Announcement): A theory of
topological regular neighborhoods is described, which represents the full
analogue in TOP of piecewise linear regular neighborhoods (or block bundles)
in PL. In simplest terms, a topological regular neighborhood of a manifold $M$
locally flatly embedded in a manifold $Q$ ($\partial M=\varnothing=\partial
Q\;$here) is a closed manifold neighborhood $V$ which is homeomorphic fixing
$\partial V\cup M$ to the mapping cylinder of some proper surjection $\partial
V\rightarrow M$. The principal theorem asserts the existence and uniqueness of
such neighborhoods, for $\dim Q\geq6$. One application is that a cell-like
surjection of cell complexes is a simple homotopy equivalence (first proved
for homeomorphisms by Chapman). There is a notion of transversality for such
neighborhoods, and the theory also holds for locally tamely embedded polyhedra
in topological manifolds. This work is a derivative of the work of
Kirby-Siebenmann; its immediate predecessor is Siebenmann's \textquotedblleft
Approximating cellular maps by homeomorphisms\textquotedblright\ Topology
11(1972), 271-294.

This version of Part I is bare in spots and short on polish, but experts will
find all necessary details. Part II is only sketched.

\end{abstract}
\maketitle
\tableofcontents

\noindent\textbf{Note from the editors. }This manuscript is an electronic
version of a handwritten manuscript obtained from the author and dating back
to 1973. As noted in the abstract, this is not a complete and polished work.
Part I is nearly complete but lacking in a few details; a plan for Part II is
described in the manuscript, but there is no evidence it was ever written.
Despite its incomplete nature, the handwritten version of this manuscript was
widely circulated and read. Its influence can be deduced from its appearance
(sometimes under the alternative title \textquotedblleft TOP regular
neighborhoods\textquotedblright) in the bibliographies of a large number of
important papers from that era.

In the process of editing the original manuscript, some obvious `typos' were
corrected and a few other minor improvements were made. For example a number
of missing references, which the author had intended to fill in later, have
been included, and others were updated from preprint status to their final
publication form. (This accounts for a few post-1973 references in the
bibliography.) In a few places, modern notation---more compatible with a \ Tex
doucument---replaces earler notation. Otherwise, this version remains faithful
to the original. In particular, no attempt was made to complete unfinished
portions of the manuscript. Notes from the author (sometimes to himself) about
missing details or planned improvements are included. The decision to leave
the manusript largely unaltered leads to a few awkward situations. For
example, some passages make references to the unwritten `Part II'; and in a
few places there are incomplete sentences---sometimes due to phrases cut off
or rendered unreadable by Xerox machines from long ago. A missing portion of
text is indicated by a short blank line: $\underline{\quad\quad\quad}$.
Despite the minor imperfections, readers will find much interesting and
important mathematics, and some excellent exposition, on these pages.

The editors apologize and accept full responsibility for any new errors that
crept into the manuscript during the conversion process.\newpage

\part{\bigskip}

\section{Introduction\bigskip}

A topological regular neighborhood of a manifold $M$ locally flatly embedded
in a manifold $Q$ ($\partial M=\varnothing=\partial Q$ here; all manifolds
topological) is most easily defined as a closed manifold neighborhood $V$ of
$M$ in $Q$ such that $(V;\partial V,M)$ is homeomorphic to the mapping
cylinder $(Z(r|);\partial V,M)$, of the restriction to $\partial V$ of some
proper retraction $r:V\rightarrow M$. The basic aim of this paper is to prove
the existence and uniqueness of such neighborhoods, for $\dim Q\geq6$. This is
essentially accomplished in Sections 5 and 6. It turns out that such
neighborhoods are more useful if their definition is given in less stringent
form. The alternative (but equivalent) definition is given in Section 1 and
developed in Sections 3 and 4.

\indent Topological regular neighborhoods can be regarded as the analogue in
TOP of block bundles in PL. They have the disadvantage of certain dimension
restrictions, but they have the advantage of a bit more flexibility: certain
pathological fibers are permitted and conversely certain nice fibers can be demanded.

\indent For example, the following is true: if $M^{m}$ is a locally flat
submanifold of $Q^{m+q}$ (say no boundaries), $m+q\geq6$, then $M$ has a
closed manifold mapping cylinder neighborhood $V$ in $Q$ (as above) such that
all fibers $\{r^{-1}(x)\}$ are locally flat $q$-discs which intersect
$\partial V$ in locally flat $(q-1)$-spheres.

\indent Hence one feature of topological regular neighborhoods is that they
may serve as ersatz disc bundle neighborhoods in dimensions where the latter
may fail to exist (see Remark 1.3). However, they have other uses as well, for
example for showing that a cell-like map of cell complexes is a simple
homotopy equivalence, and transversality. The theory also extends to tamely
embedded polyhedra in topological manifolds.

\indent There are several other prior and related neighborhood theories in the
literature, but we defer discussion of these until Section 2, after definitions.

\indent This work grew out of my alternative proof \cite{E2} of Chapman's
Theorem that a topological homeomorphism of polyhedra is a simple homotopy
equivalence. In fact, it was developed to correct a flaw in my first proof of
that theorem, a flaw which it turned out had a much simpler remedy. (The flaw
was an implicit assumption that all triangulations are combinatorial; the
remedy is represented by Theorem 1.2 in \cite{E1}.)

\indent I would like to thank L. Siebenmann for his many valuable comments and
suggestions concerning this paper. Also, I thank Alexis Marin and Ron Stern
for their participation in its development.\newline

\section{Notation, definitions and some examples}

Throughout this paper, we will adhere to the following notational conventions.%
\[
B^{n}=[-1,1]^{n}\subset\mathbb{R}^{n}=\mathbb{R}^{n}\times0\subset
\mathbb{R}^{q}.
\]
$\partial B^{n}$ or $\dot{B}^{n},$ $\operatorname*{int}B^{n}$ or $\mathring
{B}^{n}$, $rB^{n}$ and $r\partial B^{n}$ (for $r>0$) are all used in the usual
ways. $D^{n}$ is used to denote any homeomorphic copy of the unit ball
$\{(x_{1},\ldots,x_{n})\in\mathbb{R}^{n}\mid\sum x_{i}^{2}\leq1\}$ in
$\mathbb{R}^{n}$ and $S^{n-1}$ any homeomorphic copy of its boundary; if the
context requires, regard $D^{n}$ and $S^{n-1}$ as actually being the unit ball
and sphere. (Reason for this rigmarole: Sometimes its useful to have distinct
$n$-cells $B^{n}$ and $D^{n}$ around.)

\indent Given map$\;f:X\rightarrow Y$, let $Z(f)$ denote the mapping cylinder
and $\rho:Z(f)\rightarrow Y$ the mapping cylinder retraction. Thus
\[
Z(f)=(X\times\lbrack0,1]\sqcup Y)/\{(x,1)\sim f(x)\text{ for }x\in X\}
\]
and $\rho(x,t)=f(x)$.

\indent A map of pairs $f:(X,A)\rightarrow(Y,B)$ is \emph{faithful }if
$f^{-1}(B)=A$, not more. We prefer to save `proper' for its more widespread
meaning: $f:X\rightarrow Y$ is \emph{proper} if preimages of compact sets are compact.

\indent The notation $f:X{\normalsize
{\includegraphics[
trim=0.000000in 0.002046in 0.000000in 0.002046in,
height=0.1064in,
width=0.2093in
]%
{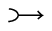}%
}%
}Y$ indicates that $\operatorname*{domain}(f)\subset X$, not necessarily equal
to $X$.

\indent Suppose $M^{m}$ is a topological manifold (with or without boundary,
compact or not). The following definition is the first of two.

\begin{definition}
[Mapping cylinder version]An \textit{(abstract) }\textbf{topological regular
neighborhood} of $M^{m}$ (TRN for short) is a triple $(V^{m+q},M^{m},r)$ where
$V$ is a manifold-with-boundary and $r:V\rightarrow M$ is a proper retraction
such that
\end{definition}

\begin{itemize}
\item[(1)] $(M,\partial M) \hookrightarrow(V, \partial V)$ is a faithful,
locally flat inclusion (\textit{faithful} $\equiv M \cap\partial V = \partial
M)$,

\item[(2)] $\delta V\equiv r^{-1}(\partial M)$ is a collared codimension 0
submanifold of $\partial V$ (define $\dot{V}=cl(\partial V-\delta V)$ and
$\mathring{V}=V-\dot{V}$), and

\item[(3)] $(V;\dot{V},M,r)$ is isomorphic (keeping $\dot{V}\cup M$ fixed) to
the mapping cylinder of $r|:\dot{V}\rightarrow M$, that is, $(V;\dot
{V},M,r)\approxeq(Z(r|_{\dot{V}});\dot{V},M,\rho)$ where $\rho$ is the mapping
cylinder retraction
\end{itemize}

\indent This definition, although quite natural, turns out to be too
restrictive for certain purposes. For example, one would like the composition
of TRN's to be a TRN. Consider:

\begin{example}
\label{Ex 1.1}(See Figure 1.) This example describes two mapping cylinder
TRN's $r_{1}:V_{1}\rightarrow V_{2}$ and $r_{2}:V_{2}\rightarrow J$ whose
composite $r_{2}r_{1}:V_{1}\rightarrow J$ is not a mapping cylinder TRN.

For the purposes of this example, let $I=J=K=[-1,1]$, to be thought of as
first, second, and third coordinate intervals in $\mathbb{R}^{3}$.

Let $(J\times K,J,r_{0})$ be the mapping cylinder TRN as pictured in Figure
1a,\begin{figure}[th]
\begin{center}
\includegraphics{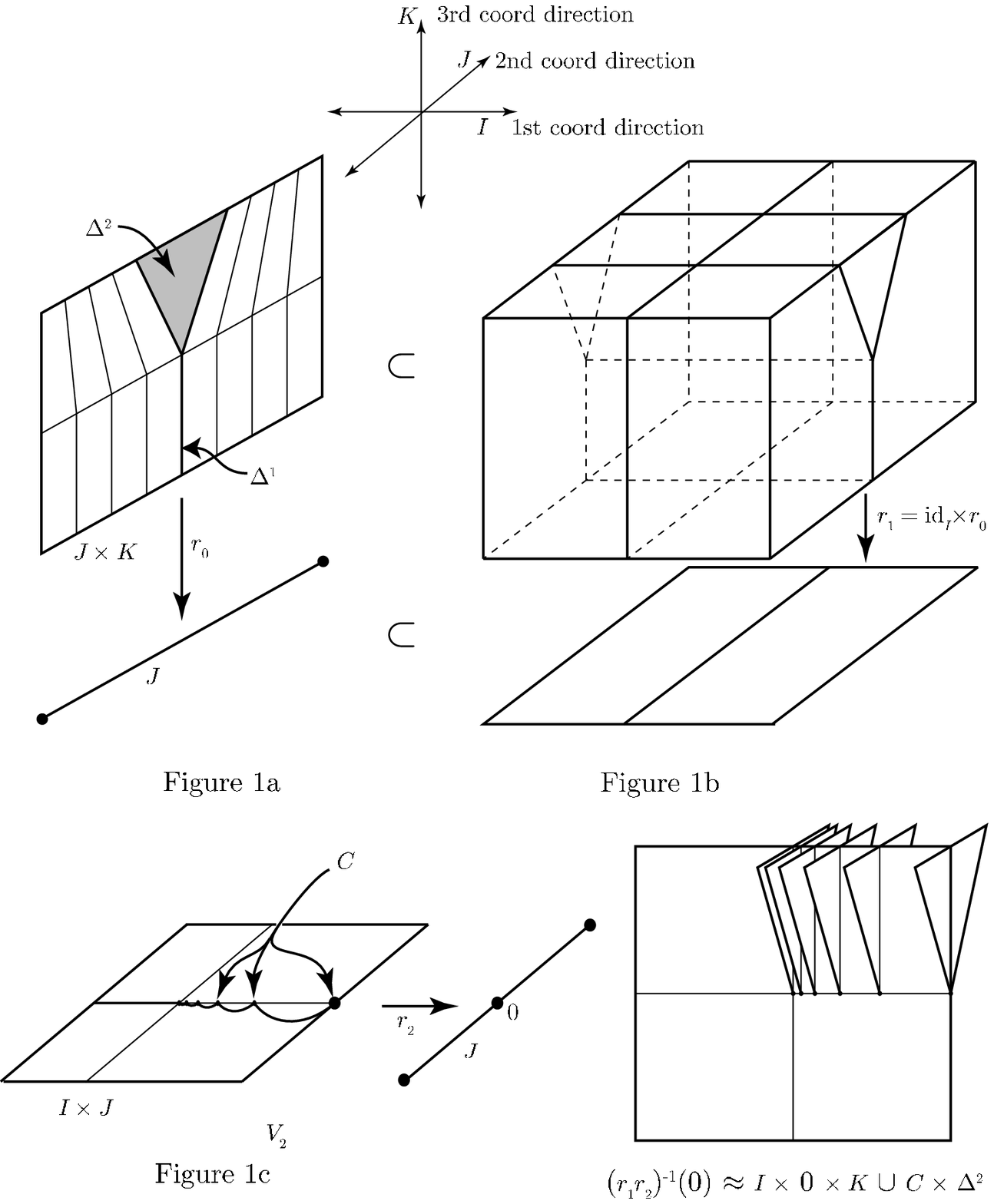}
\end{center}
\caption{Example 1: Composition of TRN's}%
\end{figure}such that $r_{0}^{-1}\left(  0\right)  =\Delta^{1}\vee\Delta^{2}$
is the only non-interval point inverse. Product this TRN with the interval $I$
to get
\[
(V_{1},V_{2},r_{1})\equiv(I\times J\times K,I\times J,\operatorname{id}%
_{I}\times r_{0})
\]
as shown in Figure 1b.

Let $V_{2}=I\times J\overset{r_{2}}{\longrightarrow}J$ be obtained from the
standard-projection TRN $\pi_{J}:I\times J\rightarrow J$ by a slight
perturbation of the projection map $\pi_{J}$, as shown in Figure 1c.
Specifically, let $h:I\times J\rightarrow I\times J$ be a $(t\times J)$-level
preserving homeomorphism such that%
\[
h(I\times0)\cap I\times0=C\equiv\operatorname*{cl}\{(1/n,0)\mid n>0\}\subset
I\times0\text{,}%
\]
and define $r_{2}=\pi_{J}h^{-1}:I\times J\rightarrow J$.

The composition $r_{2}r_{1}:V_{1}\rightarrow J$ is not a mapping cylinder
projection, as $(r_{2}r_{1})^{-1}(0)\approx(I\times0\times K)\cup
(C\times\Delta^{2})$, which is not a cone. See Figure 1d.
\end{example}

Before giving the second definition, it is worth considering the analogous
situation in PL for motivation. There, one can define an abstract regular
neighborhood of a manifold $M$ (without boundary here) as a triple $(V,M,r)$
where $V$ is a manifold with boundary, $M\subset\operatorname*{int}V$ and
$r:V\rightarrow M$ is a PL collapsible retraction, where \emph{collapsible}
means each point inverse $r^{-1}(x)$ is a collapsible polyhdedron. This is M.
Cohen's observation \cite{Co2}, and it provides an alternative way of defining
block bundles \cite[\S 4]{RSI}. Cohen shows that such a $V$ has topological
mapping cylinder structure (\cite{Co1}; one has to be careful with PL mapping
cylinders [?]). With this definition, the composition of PL regular
neighborhoods, as defined above, is readily a PL regular neighborhood
\cite[Lemma 8.6]{Co2}.

\indent The most general analogue in TOP of a piecewise linear collapsible
polyhedron is a cell-like compactum. This suggests the topological adaptation
of Cohen's definition. First we need some preliminary definitions, which we
give in anodyne form for the nonexpert in shape theory.

\indent Let $X$ be a finite dimensional compact metric space. Such an $X$ is
\emph{cell-like} \label{cell-like}if $X$ embeds in some euclidean space
$\mathbb{R}^{q}$ so that its image is cellular, that is, the intersection of
open $q$-cells. Similarly, $X$ is $k$\emph{-sphere-like} if $X$ embeds in some
euclidean space $X\hookrightarrow\mathbb{R}^{q}$ so that $X=\cap_{i=1}%
^{\infty}f_{i}(S^{k}\times\mathbb{R}^{q-k})$ where each $f_{i}:S^{k}%
\times\mathbb{R}^{q-k}\rightarrow\mathbb{R}^{q}$ is an embedding, and
$\operatorname*{image}f_{i+1}\hookrightarrow\operatorname*{image}f_{i}$ is a
homotopy equivalence. Also, $X$ is $k$\emph{-UV} if given any embedding
$X\hookrightarrow\mathbb{R}^{q}$ and any neighborhood $U$ of $X$ in
$\mathbb{R}^{q}$, there is a neighborhood $V$ of $X$, $V\subset U$, such that
any map $\alpha:S^{k}\rightarrow V$ is null-homotopic in $U$. $X$ is
$\emph{UV}^{k}$ if it is $j$-\emph{UV} for $0\leq j\leq k$. It is the message
of shape theory that these properties are intrinsic properties of $X$ and can
be so characterized, without any reference to a specific embedding.

\indent An inclusion $Y\hookrightarrow X$ of a closed subset $Y$ into a
locally compact, finite-dimensional separable metric space $X$ is a
\emph{shape equivalence} if for any embedding $X\hookrightarrow Q$ of $X$ onto
a closed subset of a manifold $Q$ (i.e., proper embedding), the following
holds: given neighborhoods $U$ of $X$ in $Q$ and $W$ of $Y$ in $Q$, there is a
neighborhood $V$ of $X$ in $Q$ such that $V$ homotopically deforms into $W$ in
$U$, keeping some neighborhood $N$ of $Y$ fixed. That is, there is a homotopy
$h_{t}:V\rightarrow U$, $t\in\lbrack0,1]$, joining $\operatorname{id}%
_{V}=h_{0}:V\rightarrow U$ to a map $h_{1}:V\rightarrow W$, such that
$h_{t}|_{N}=\operatorname{id}$. (independent of $t$).%
\begin{figure}[th]
\begin{center}
\includegraphics{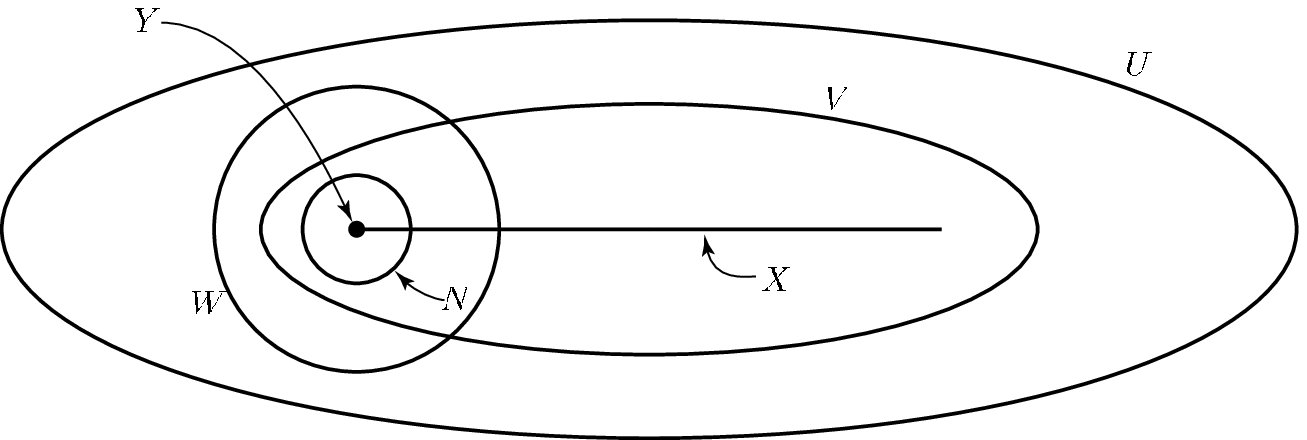}
\end{center}
\end{figure}

Shape theory says that if this definition holds for one proper embedding
$X\hookrightarrow Q$, it hold for all \cite[p.499]{La2}.

Suppose $r:V\rightarrow M$ is a proper retraction of spaces and $\dot{V}$ is
some distinguished closed subset of $V$. We use the notation $F_{x}=r^{-1}(x)$
and $\dot{F}_{x}=F_{x}\cap\dot{V}$, for $x\in M$. Recall $r$ is
\emph{cell-like} (CE) if each $F_{x}$ is cell-like \cite{La}. We call $r$
\emph{cell-like, sphere-like} (CS) if each $F_{x}$ is cell-like and each
$\dot{F}_{x}$ is sphere-like. Finally (and most importantly), we call $r$
\emph{cone-like} if each $F_{x}$ is cell-like and the pair $(F_{x}-x,\dot
{F}_{x})$ is proper shape equivalent to $(\dot{F}_{x}\times\lbrack0,1),\dot
{F}_{x})$. See \cite{BS}. In order to obviate proper shape theory, we remark
in advance that in the following definition, one can interpret
\emph{cone-like} \label{cone-like}to mean that $r$ is CS and each inclusion
$\dot{F}_{x}\hookrightarrow F_{x}-x$ is a shape equivalence (in fact, in
codimension $\geq3$ one need only assume $r$ is CE and each $\dot{F}_{x}$ has
property $1$-UV; details are in \S 3).

\indent$M^{m}$ is a topological manifold (with or without boundary, compact or
not). The following definition differs from the previous mapping cylinder
version only in condition (3).

\begin{definition}
[Cone-like version]An (abstract)\textbf{ topological regular}
\textbf{neighborhood }of $M^{m}$ is a triple $(V^{m+q},M^{m},r)$ where $V$ is
a manifold-with-boundary and $r:V\rightarrow M$ is a proper retraction such that
\end{definition}

\begin{itemize}
\item[(1)] $(M,\partial M)\hookrightarrow(V,\partial V)$ is a faithful locally
flat inclusion.

\item[(2)] $\delta V\equiv r^{-1}(\partial M)$ is a collared codimension $0$
submanifold of $\partial V$ (define $\dot{V}=cl(\partial V-\delta V)$ and
$\mathring{V}=V-\dot{V})$, and

\item[(3)] $r:V\rightarrow M$ is cone-like.\footnote{See the note at bottom of
page \pageref{condition(0)}.}
\end{itemize}

\indent The following examples are to illuminate the definition. The last two
are relevant only to codimension 2.\medskip

\begin{example}
This example shows why $r$ must be more than just cell-like. Let $V$ be any
compact contractible manifold and $m=\operatorname{point}\;\mathbb{\in
}\operatorname*{int}V$ and $r:V\rightarrow m$ the retraction. Then $r$ is CE,
but if one wants uniqueness to hold in the theory, there must be some
condition which force $\partial V$ to be a homotopy sphere instead of just a
homology sphere.
\end{example}

\begin{example}
This shows the need for the strong cone-like hypothesis on $r$ in codimension
2. (For polyhedra, see Siebenmann's example in \S 8\footnote{\textbf{Note from
editors:} In fact, this example did not make it into \S 8.}). Let
$(B^{m+2},D^{m})$ be a knotted locally flat ball pair such that the sphere
pair $(\partial B^{m+2},\partial D^{m})=(\partial B^{m+2},\partial B^{m})$ is
standard. Recall that these can be constructed with $(B^{m+2}-D^{m},\partial
B^{m+2}-D^{m})$ highly connected \cite{Wa}. There is a CS retraction
$r:B^{m+2}\rightarrow D^{m}$ which is a standard $B^{2}$-fibered projection
over $D^{m}-0$ and such that $F_{0}$ is homotopy equivalent to the
contractible space $B^{m+2}-(D^{m}-0)$, with $\dot{F}_{0}\approx S^{1}\times
B^{m}$. Since $(B^{m+2},D^{m})$ is not standard, it is necessary to rule out
such an $r$.
\end{example}

\begin{example}
This shows that in codimension 2, it is not enough to just assume that $r$ is
cell-like and each inclusion $\dot{F}_{x}\hookrightarrow F_{x}-x$ is a shape
equivalence (as opposed to the proper shape equivalence in the definition of
cone-like). \textbf{Note. }This example is incomplete. It requires a knotted
embedding $f:S^{n}\rightarrow S^{n+2}$ which permits a concordance
$F:S^{n}\times I\rightarrow S^{n+2}\times I$ to the standard $S^{2}$ so that
$S^{n+2}-f\left(  S^{n}\right)  \hookrightarrow S^{n+2}\times I-F\left(
S^{n}\times I\right)  $ is a homotopy equivalence (everything locally flat).
Then we could construct this example.\label{ex: cell-like/cone-like}
\end{example}

\begin{remark}
(Concerning $\delta V$). If $(V,M,r)$ is a TRN of $M$, then $(\delta
V,\partial M,r|_{\delta V})$ is a TRN of $\partial M$ (either definition).
\textbf{Note:} $(\delta V)^{\cdot}=\partial{\dot{V}}$, which we will denote
$\delta\dot{V}$; also $(\delta V)^{\circ}=\partial\mathring{V}$, which we will
denote $\delta\mathring{V}$. Actually, our definition of TRN for manifolds
with boundary is not the most general, as one need not require $\delta V$ to
coincide with $r^{-1}(\partial M)$. We postpone this relaxation and its
details until the discussion of neighborhoods of polyhedral pairs in Part II,
where it becomes necessary.
\end{remark}

\begin{remark}
(Concerning the equivalence of definitions). It is routine to show that a
mapping cylinder TRN is a cone-like TRN, using definitions. The converse of
course is not strictly true, but it is as true as could be expected: if
$(V,M,r)$ is a cone-like TRN, then there is a mapping cylinder retraction
$r^{\prime}:V\rightarrow M$ which is arbitrarily close to $r$ and agrees with
$r$ on $\dot{V}$ $(\dim V\neq4;$ for $\dim V=3$ see next remark). That is,
$(V;\dot{V},M,r^{\prime})\approx(Z(r|_{\dot{V}});\dot{V},M,\rho)$ $(rel$
$\dot{V}\cup M)$. Details are in Section 4.
\end{remark}

\begin{remark}
(Concerning non-locally flat embeddings of $M$). The definitions make perfect
sense even if $M$ is not locally flatly embedded in $V$. However, we cannot
say anything non-trivial regarding existence-uniqueness in this case, and the
techniques of this paper are no help there. Recall that if non-combinatorial
triangulations of topological manifolds exist, i.e., if the double suspension
of some genuine homology sphere is topologically homeomorphic to a real sphere
, then there is a nonlocally flat embedding of $S^{1}$ (namely the suspension
circle of the above suspension) into some sphere such that the embedding has a
manifold mapping cylinder neighborhood. Further details are in \cite{Gl}.

If $M^{m}$ is an arbitrary, possibly wild submanifold of $Q^{m+q}$, then
$M=M\times0\subset Q\times\mathbb{R}^{1}$ is locally flat (no dimension
restrictions; details recounted in \cite{BrS} for $q>1$.). Thus if $V$ is a
TRN of a non-locally flatly embedded $M$ (either definition), then
$V\times\lbrack-1,1]$ is a genuine TRN of $M\times0$.
\end{remark}

\begin{remark}
(Concerning disc bundle neighborhoods). Topological regular neighborhoods may
serve as a partial substitute for topological disc bundle neighborhoods in
dimensions where the latter don't exist (although even when disc bundle
neighborhoods exist, the uniqueness of TRN's is still useful; e.g., the
topological invariance of simple homotopy type for cell complexes, \S 9). We
recall what is known about existence-uniqueness of disc bundle neighborhoods.
If $M^{m}\hookrightarrow\operatorname*{int}Q^{m+q}$ is a locally flat
topological embedding, then $M^{m}$ has a unique disc bundle neighborhood if
$m+q\leq3$ (semi-classical); $q=1$ \cite{Bro}, $q=2$, $m+q\geq5$ \cite[AMS
Notices 1971]{KS}, $m\geq3$, $m+q=5,6$ again essentially by \cite{KS} (no
upper bound on $m+q$ for existence); $m\leq q+2$ [resp. $m\leq6$, $m\leq5$],
$q\geq7$ [resp. $q=6$, $q=5$], with existence holding for these $m$ increased
by one \cite{St}.

Hence the first $m+q\neq4$ case where existence fails is $(m,m+q)=(4,7)$,
realizable by a counterexample of Hirsch.
\end{remark}

\begin{remark}
(Concerning low dimensions). Subsequent theorems in Part I are all stated and
proved for ambient dimension $\geq6$ (exceptions: the mapping cylinder theorem
(\S 4) only requires ambient dimension $\geq5$, and the local contractibility
theorem (\S 7) has no dimension restrictions). As usual, all theorems hold
when ambient dimension $\leq2$ (same proofs work) and all theorems hold when
ambient dimension $=3$, if we adopt the same convention that Siebenmann did in
\cite{Si1} to get around the Poincar\'{e} conjecture: in the cone-like
definition of TRN, assume in addition that each fiber $F_{x}$ has a manifold
neighborhood in $V$ which is prime $(\equiv$ there is no 2-sphere which
separates the manifold into two non-cells). The mapping cylinder definition
works as stated; its fibers automatically have this property. Ambient
dimensions 4 and 5 remain a mystery because of the failure of the s-cobordism
theorem there \cite{Si4}. But remember that in dimension 5, disc bundle
neighborhoods exist and are unique (see preceding Remark).
\end{remark}

\indent We continue with more definitions. Two abstract topological regular
neighborhoods $(V_{0},M,r_{0})$ and $(V_{1},M,r_{1})$ are \emph{homeomorphic}
if they are homeomorphic as triples $(V_{0},M,\delta V_{0})\approx
(V_{1},M,\delta V_{1})$ keeping $M$ fixed. Two such TRN's are
\emph{isomorphic} if they are homeomorphic via $h:(V_{0},M,\delta
V_{0})\overset{\approx}{\longrightarrow}(V_{1},M,\delta V_{1})$ so that
$r_{1}=r_{0}h^{-1}$. This notion seldom arises because of its excessive strength.

\indent If $(M,\partial M)\hookrightarrow(Q,\partial Q)$ is a faithful locally
flat inclusion and $V$ is a TRN of $M$ in $Q$, we always assume (unless
otherwise stated) that $V\cap\partial Q=\delta V$ and that $(\dot{V}%
,\delta\dot{V})$ is collared in $(Q-\mathring{V},\partial Q-\delta\mathring
{V})$. Two TRN's $(V_{0},M,r_{0})$ and $(V_{1},M,r_{1})$ of $M$ in $Q$ are
\emph{equivalent }in $Q$ if there is a homeomorphism of $Q$ whose restriction
gives a homeomorphism of $V_{0}$ onto $V_{1}$. They are \emph{equivalent by
ambient isotopy} if this homeomorphism can be chosen isotopic to
$\operatorname{id}_{Q}$ through homeomorphisms of $Q$ fixed on $M$. Invariably
such an ambient isotopy will by construction leave a neighborhood of $M$
fixed; if not, it can be so arranged by the isotopy extension theorem.

\indent Although not explicitly required in the definition, all our
equivalences by ambient isotopy $h_{t}:Q\rightarrow Q$, $t\in\lbrack0,1]$, can
be followed by a \emph{cone-like homotopy} $r_{t}^{\prime}:V_{1}\rightarrow M$
($\equiv$ homotopy through cone-like retractions) joining $r_{0}^{\prime
}=r_{0}h_{1}^{-1}$ to $r_{1}^{\prime}=r_{1}$. This will sometimes prove
useful, and will be mentioned explicitly whenever it arises.

\indent We conclude this section with a useful example, which captures the
difference between topological disc bundles and topological regular neighborhoods.

\begin{example}
\label{Ex: capping off}(Capping Off). This example illustrates the fundamental
compactification operation for TRN's. Suppose $r:\mathbb{R}^{m}\times
B^{q}\rightarrow\mathbb{R}^{m}=\mathbb{R}^{m}\times0$ is any cone-like
retraction. Regard $S^{m}=\mathbb{R}^{m}\cup\infty$ and define $i$ =
$\operatorname{inclusion}\times\operatorname{id}:\mathbb{R}^{m}\times
B^{q}\hookrightarrow S^{m}\times B^{q}$. Then $\overline{r}=S^{m}\times
B^{q}\rightarrow S^{m}$ defined by
\[
\overline{r}=\left\{
\begin{array}
[c]{ll}%
iri^{-1}\quad\text{on}\quad(S^{m}-\infty)\times B^{q} & \\
\operatorname{projection}\text{ to}\;\infty\;\text{on}\;\infty\times B^{q} &
\end{array}
\right.
\]
is a cone-like retraction.
\begin{figure}[th]
\begin{center}
\includegraphics{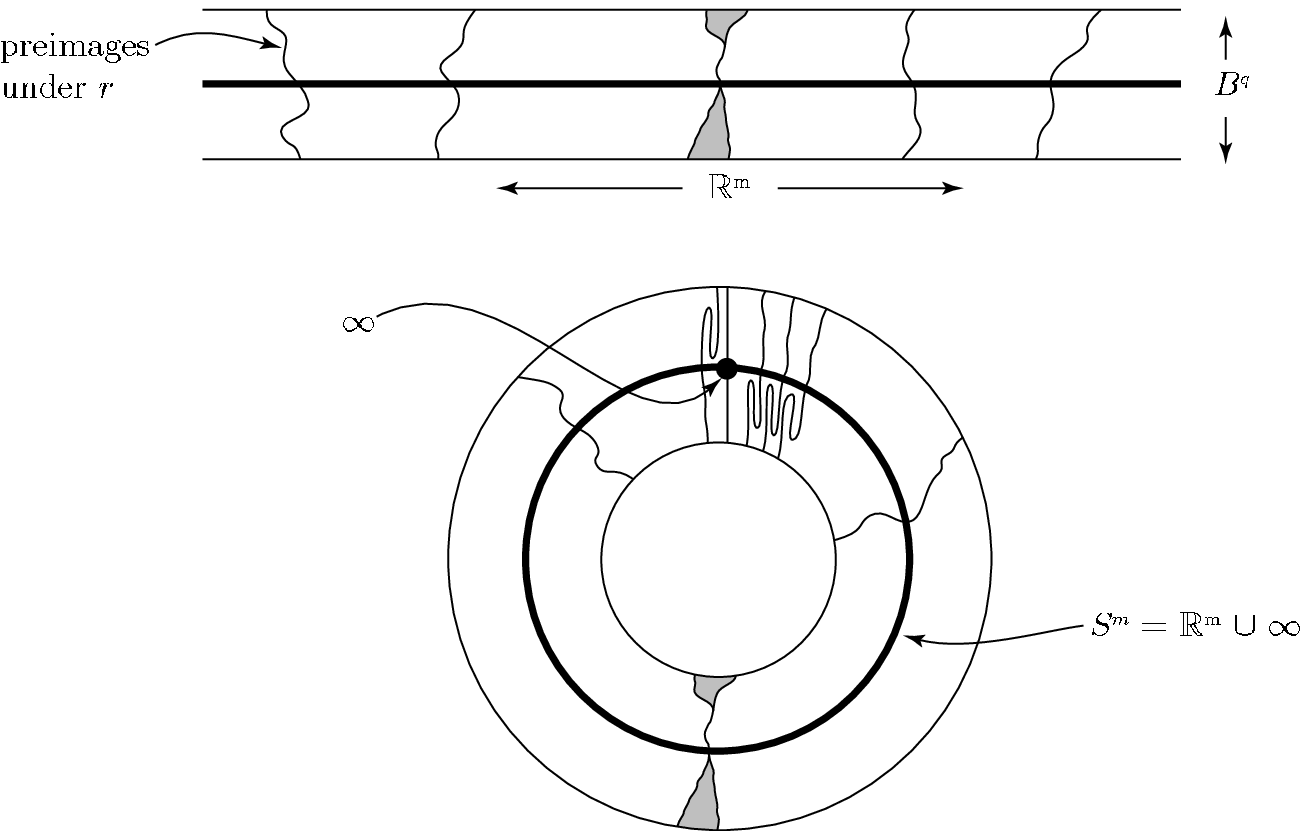}
\end{center}
\end{figure}

\end{example}

\section{Statement of results; general remarks}

The primary goal of Part I is to prove:

\begin{theorem}
[Existence-Uniqueness Theorem]\label{existence-uniqueness thm}Suppose
$(M^{m},\partial M)\hookrightarrow(Q^{m+q},\partial Q)$ is a faithful locally
flat inclusion of topological manifolds, $m+q\geq7$, $(\geq6$ provided that
$\partial M=\varnothing$ or that the conclusion already holds at $\partial
M)$. Then $M$ has a topological regular neighborhood in $Q$, and any two are
equivalent by ambient isotopy of $Q$.
\end{theorem}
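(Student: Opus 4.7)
The plan is to bootstrap from a \emph{local} existence-uniqueness statement over a product chart to the global theorem, via a Kirby-Siebenmann style patching argument. The local statement will rest on the mapping cylinder theorem of \S 4 (which in turn uses Siebenmann's cell-like approximation theorem) together with the s-cobordism theorem, the latter being what forces the hypothesis $m+q \geq 6$.

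First I would dispose of the boundary. If $\partial M \neq \varnothing$, the hypothesis already supplies a TRN of $\partial M$ in $\partial Q$, ambient-isotopically unique. Thickening this through a bicollar of $\partial Q$ in $Q$ and a collar of $\partial M$ in $M$ (both granted by local flatness) yields a TRN of an open collar of $\partial M$ in $Q$. The remainder of the argument can then be carried out rel a fixed neighborhood of $\partial M$, so I may assume $M$ and $Q$ are closed and that everything is already done in a neighborhood of $\partial M$.

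Next I would set up the globalization. Choose a locally finite cover of $M$ by coordinate charts $\{U_i\}$ with compact closures $\overline{U_i}$, each extending via local flatness to a product chart $\overline{U_i} \times \mathbb{R}^q \hookrightarrow Q$. Over each $U_i$ the coordinate projection $\overline{U_i} \times B^q \to \overline{U_i}$ is the archetypal TRN. The inductive patching step then runs as follows: suppose a TRN has already been built over $U_1 \cup \cdots \cup U_{k-1}$; by the local lemma its restriction to the product chart around $\overline{U_k}$ is ambient-isotopic, rel a neighborhood of the frontier, to the coordinate product, and applying that isotopy allows the product TRN to be glued across $U_k$. Uniqueness is proved in tandem by the same procedure: given two TRN's with an ambient isotopy matching them over $U_1 \cup \cdots \cup U_{k-1}$, the local lemma furnishes the next step. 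The capping-off construction of Example \ref{Ex: capping off} enters as the technical bridge between the local (non-compact) setting and the closed-manifold setting required to apply Siebenmann-type theorems.

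The heart of the argument, and the expected main obstacle, is the \emph{local existence-uniqueness lemma}: any TRN of $\mathbb{R}^m \times 0$ in $\mathbb{R}^m \times \mathbb{R}^q$ is equivalent, by a compactly supported ambient isotopy, to the standard product $\mathbb{R}^m \times B^q \to \mathbb{R}^m$. To attack it, cap off in both base and fiber directions via Example \ref{Ex: capping off} to obtain a cone-like TRN over a closed manifold. The mapping cylinder theorem of \S 4, itself built on Siebenmann's approximation of cell-like maps by homeomorphisms, then promotes cone-likeness to a genuine mapping cylinder structure. Straightening this mapping cylinder against the standard product mapping cylinder becomes an $s$-cobordism problem on the sphere-like fiber boundaries; the s-cobordism theorem resolves it precisely when $m+q \geq 6$. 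Finally the Kirby-Siebenmann torus trick is invoked to realize the straightening with compact support, which is exactly what feeds back into the patching induction and completes both existence and uniqueness.
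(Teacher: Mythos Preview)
Your global architecture---reduce to a local handle lemma and then patch chart-by-chart in the style of \cite{EK}---is exactly the paper's strategy, and your treatment of the boundary is fine. The difficulty is entirely in your proposed proof of the local lemma, where the ingredients are right but assembled in an order that does not work.

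The gap is the capping-off step. Example \ref{Ex: capping off} takes as input a cone-like retraction $r:\mathbb{R}^m\times B^q\to\mathbb{R}^m$ whose total space is \emph{already} the standard product, and compactifies the base. But in your local lemma the TRN is an abstract $(V,\mathbb{R}^m,r)$ sitting in $\mathbb{R}^m\times\mathbb{R}^q$; you do not yet know $V\approx\mathbb{R}^m\times B^q$, nor that $V$ is standard near infinity, so there is nothing to cap. Knowing that $V$ is standard at infinity is essentially the content of the local lemma itself. In the paper this is exactly what the torus trick produces: one pulls $r$ back along an immersion $B^k\times T_0^n\to B^k\times\mathbb{R}^n$, uses the (infinite and then compact) $s$-cobordism theorem to compactify over the torus and to identify the pulled-back TRN with $B^k\times T^n\times B^q$, and only then lifts to the universal cover and uses the $j$-embedding to get a bounded, hence compactly extendable, straightening. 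So the torus device is the engine that manufactures the product structure, not a post-hoc trick to achieve compact support after an $s$-cobordism argument has already run.

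Two smaller points. First, \S 4 plays no role in the paper's proof of handle straightening; it is a separate statement about the equivalence of the two definitions of TRN, and invoking it here does not advance the comparison with the standard product. Second, for the inductive patching you need the \emph{relative} handle version over $B^k\times\mathbb{R}^n$ with everything already standard over $\partial B^k\times\mathbb{R}^n$; the paper's Handle Straightening Lemma gives control of the wrong sort (standard near infinity rather than near the origin), and a separate inversion argument (Lemma $\Rightarrow$ Theorem, applying the Lemma twice) is needed to flip this. Your sketch elides this step.
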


\noindent\textbf{Addendum.} The ambient isotopy $h_{t}:Q\rightarrow Q$ which
realizes the homeomorphism of $(V_{0},M,r_{0})$ to $(V_{1},M,r_{1})$ may be
chosen as the composition $h_{t}=h_{1,t}^{-1}h_{0,t}$ of two well-controlled
ambient isotopies $h_{0,t}$ and $h_{1,t}$, where \emph{well-controlled} means
that each $h_{i,t}$, $t\in\left[  0,1\right]  $, moves only those points which
lie near $V_{i}$ but not near $M$, along tracks which lie arbitrarily close to
individual fibers of $V_{i}$. Furthermore, the cone-like retractions
$r_{0}h_{1}^{-1}$ and $r_{1}$ of $V_{1}$ to $M$, which are close by
construction, can be joined by a small cone-like homotopy.\newline%
\newline\textbf{Remark 2.2} (Concerning special neighborhoods.) There are
actually several useful subclasses of TRN's, each gotten by putting more
restrictions on the fibers $(F_{x},\dot{F}_{x})$ in either original
definition. The Existence-Uniqueness Theorem holds for each class (with no
change in the proof). Some sub-classes are in order of increasing restrictiveness:

\begin{itemize}
\item[(1)] (the original fibers, for comparison) $(F_{x},\dot{F}_{x}%
)\overset{\text{shape}}{\thicksim}(B^{q},S^{q-1})$

\item[(2)] $(F_{x},\dot{F}_{x})\overset{\text{htpy equiv.}}{\thicksim}%
(B^{q},S^{q-1})$

\item[(3)] (1) plus $F_{x}$ and $\dot{F}_{x}$ are ANR's. Note this implies (2) holds.

\item[(4)] $(F_{x},\dot{F}_{x})\overset{\text{homeo.}}{\approx}(B^{q}%
,S^{q-1})$

\item[(5)] $(F_{x},\dot{F}_{x})\overset{\text{homeo.}}{\approx}(B^{q}%
,S^{q-1})$ and each $(F_{x},\dot{F}_{x})$ is locally flat in $(V,\dot{V})$.
\end{itemize}

\indent Class (5) provides the nicest neighborhoods as far as existence is
concerned, whereas the original cone-like definition offers the strongest
uniqueness theorem. The cone-like homotopy of the Addendum belongs to the
appropriate class.

\indent The theory of topological regular neighborhoods is quite evidently
modelled on the theory of PL regular neighborhoods and PL block bundles (which
are really the same things, looked at from different perspectives, c.f.
\cite[\S 4]{RSI}. For the former, our preferred reference is Cohen \cite{Co2},
and we have already remarked (in \S 1 after Example \ref{Ex 1.1}) how the
treatment there is reflected here. Topological regular neighborhoods are not
by definition partitioned into blocks, but they can be if the core manifold
$M$ has a handle structure (as it does if $\dim M\neq4,5)$. This is discussed
more fully in Part II. Topological regular neighborhood theory is completely
parallel to block bundle theory, except for the bothersome dimension restrictions.

\indent It is worth recalling other topological neighborhood theories which
are already established. Suppose $X$ is a compact subset of a topological
manifold $Q$. If $X$ is arbitrary there is little that can be said, except
that most embeddings of $X$ into $Q$ (most $\equiv$ a dense $G_{\delta}$
subset of all embeddings) are \emph{locally tame}, defined to mean $Q-X$ is
$k$-LC at $X$ for all $0\leq k\leq\dim Q-\dim X-2$, where $\dim X$ is the
covering dimension. Interestingly, in the trivial range $2\dim X+2\leq\dim
Q\neq4$, homotopy implies ambient isotopy for such locally tame embeddings
\cite{Bry}. Below this range there is no hope of classifying neighborhoods as
there may be uncountably many distinct neighborhood germs, even for $X$ a
locally tamely embedded ANR.

If $X$ is shape dominated by a finite complex, there is a nice theory of open
regular neighborhoods worked out by Siebenmann \cite{Si3}. Briefly, an open
regular neighborhood of $X$ in $Q$ is an open neighborhood $U$ which satisfies
a certain compression property: given any compact subset $K$ of $U$ and any
neighborhood $W$ of $X$, there is a homeomorphism $h$ of $U$ having compact
support and fixing a neighborhood of $X$, such that $h(K)\subset W$. Such
neighborhoods have the homotopy type of $X$ and are unique. They exist if and
only if $X$ is shape dominated by a finite complex, the \textquotedblleft
if\textquotedblright\ part assuming $\dim X\leq\dim Q-3$ and $X\hookrightarrow
Q$ locally tame. Furthermore $X$ has an open radial neighborhood if and only
if $X$ actually has the shape of a finite complex $(U$ is \emph{radial} if
$U-X\approx Y\times\mathbb{R}^{1}$ for some compactum $Y$). The difference
between these situations is precisely measured by an obstruction in
$\widetilde{K}_{0}(\pi_{1}(U-X))$ that takes arbitrary values.

\indent Johnson has recently observed these facts for $X$ a topological
manifold \cite{Jo}.

\indent If $X^{m}$ is a polyhedron embedded in a topological manifold
$Q^{m+q}$, $q\geq3$, Weller has observed that any two closed manifold
neighborhoods of $X$ which are PL regular neighborhoods in some (possibly
unrelated) PL structures, are topological homeomorphic by Chapman's
topological invariance of simple homotopy type.

\indent This theory of topological regular neighborhoods represents a
sharpened form of the topological regular neighborhood theory of
Rourke-Sanderson \cite{RS4}. Briefly the relation is this: given a fixed
manifold $M$, the Rourke-Sanderson paper classifies germs at $M$ of all
manifold pairs $(Q,M)$, where $M$ is embedded in $Q$ as a locally flat
submanifold; two such pairs $(Q_{0},M)$ and $(Q_{1},M)$ have equivalent germs
if there are neighborhoods $U_{i}$ of $M$ in $Q_{i}$, $i=0,1$, such that
$(U_{0},M)\approx(U_{1},M)$ keeping $M$ fixed. This paper shows that each germ
class $[(Q,M)]$ contains as a representative a unique topological regular
neighborhood $(V,M)$. This paper recovers all the results of \cite{RS4}. We
recall them as they arise.

\indent A word on cell-like maps. They clearly play a central role in this
paper, so it is worth repeating some history from \cite{Si1} (whose complete
introduction is well worth reading). In 1967, D. Sullivan observed that the
geometrical formalism used by S. P. Novikov to prove that a homeomorphism
$h:M\rightarrow N$ of manifolds preserves rational Pontrjagin classes, uses
only the fact that $h$ is proper, and a \emph{hereditary homotopy equivalence}
in the sense that for each open $V\subset N$ the restriction $h^{-1}%
V\rightarrow V$ is a homotopy equivalence. Lacher \cite{La} was able to
identify such proper equivalences as precisely CE maps, providing one
restricts attention to ENR's (= euclidean neighborhood retracts = retracts of
open subsets of euclidean space).

\indent This paper can be regarded as an extension of Siebenmann's \cite{Si1}
in the following sense: he establishes that a cell-like surjection of
$n$-manifolds is a limit of homeomorphisms. This paper establishes that a
cone-like retraction $r:V\rightarrow M$ of manifolds is locally the limit of
disc bundle projections. For this reasons our proofs in \S 5 bear strong
resemblance to Siebenmann's proofs.

\section{Homotopy properties of TRN's}

The purpose of this section is to prove Proposition 3.1. below, which
establishes certain basic homotopy properties of TRN's. The essential result,
without refinements, is that the difference $V_{1}-\mathring{V}_{0}$ between
two TRN's of the same manifold $M\subset V_{0}\subset\mathring{V}_{1}\subset
V_{1}$ is a proper $h$-cobordism.

\indent For simplicity, we will always assume $\partial M=\varnothing=\delta
V$ in this section, with the understanding that the $\partial M\neq
\varnothing\neq\delta V$ versions of all results also hold.

\indent When reading the following Proposition, it is worth keeping in mind
that parts (1) and (2) are trivial for mapping cylinder TRN's.\footnote{This
section, as well as perhaps pages \pageref{cell-like}-\pageref{cone-like},
could have benefitted from an overhaul for clarity. I wish to emphasize that
in Proposition \ref{Prop (homotopy proposition)} what we really want is a
property (0)\label{condition(0)} from which (1) and (2) follow.\medskip
\par
(0) $\left(  V-M,\dot{V}\right)  $ \emph{is proper homotopy equivalent to
}$\left(  \dot{V}\times\lbrack0,1),\dot{V}\times0\right)  $\emph{, by an
}$\varepsilon$\emph{-controlled proper homotopy equivalence.\medskip}
\par
\noindent This property (0) is what \textquotedblleft
cone-like\textquotedblright\ is all about.}\label{footnote cross reference}

\begin{proposition}
[Homotopy Proposition]\label{Prop (homotopy proposition)}Suppose $(V,M,r)$ is
a topological regular neighborhood (either definition). Then

\begin{enumerate}
\item $M$ is a strong deformation retract of $V$. In fact, the following type
of partial deformations exist: Given any majorant map $\epsilon:M\rightarrow
(0,\infty)$ and any neighborhood $U$ of $M$ in $V$, there is a neighborhood
$W$ of $M$, $W\subset U$, and a deformation $f_{t}:V\rightarrow V$,
$t\in\lbrack0,1]$, such that $f_{0}=\operatorname{id}_{V}$, $f_{1}(V)\subset
U$ and for each $t$, $f_{t}|_{W}=\operatorname{id}_{W}$ and $f_{t}(V-W)\subset
V-W$, (i.e., $W$ is `undisturbed' by the homotopy), and $rf_{t}$ is $\epsilon
$-close to $r$.

\item $\dot{V}$ is a strong deformation retract of $V-M$. In fact, given
$\epsilon:M\rightarrow(0,\infty)$, there is a deformation $g_{t}%
:V-M\rightarrow V-M\;(rel\;\dot{V})$, joining $g_{0}=\operatorname{id}_{V-M}$
to a retraction $g_{1}:V-M\rightarrow\dot{V}$, such that for each $t,rg_{t}$
is $\epsilon$-close to $r$.

\item If $(V_{0},M,r_{0})$ is a TRN such that $V_{0}\subset\mathring{V}$ is a
closed neighborhood of $M$ in $V$, then the difference $(V-\mathring{V}%
_{0};\dot{V}_{0},\dot{V})$ is a proper $h$-cobordism.
\end{enumerate}
\end{proposition}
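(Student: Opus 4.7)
The plan is to follow the suggestion in the author's footnote: isolate a property (0) stating that $(V-M,\dot V)$ is $r$-controlled proper homotopy equivalent to the half-open cylinder $(\dot V\times[0,1),\dot V\times 0)$, prove that the cone-like hypothesis implies (0), and then deduce (1), (2), (3) by pushing along the cylinder factor. In the mapping cylinder case, (0) holds on the nose, so the work reduces to the cone-like definition; accordingly I concentrate there.

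For property (0), the starting input is the fiberwise data: for each $x\in M$, the cone-like hypothesis provides a proper shape equivalence $(F_x-x,\dot F_x)\simeq(\dot F_x\times[0,1),\dot F_x\times 0)$. The first subtask is to upgrade this fiberwise shape datum to an honest $r$-controlled proper homotopy equivalence on an $r$-saturated tube $r^{-1}(B)$, $B\subset M$ a small flat ball; here one uses that $\dot F_x$ is sphere-like and $F_x$ is cell-like, together with the collar of $\dot V$ in $V$, so that shape equivalences between nearby compacta are realized by genuine homotopy equivalences of their small ANR neighborhoods. The second subtask is to globalize. Cover $M$ by such flat balls, exploit the locally flat product structure of $M$ in $V$, and patch the local proper equivalences over a handle decomposition of $M$ (available since $M$ is a topological manifold) into a global proper homotopy equivalence $\varphi:V-M\to\dot V\times[0,1)$ respecting $\dot V$ and compatible with $r$. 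The main obstacle is precisely in this patching: it must simultaneously preserve proper homotopy type, $r$-control, and the identification on $\dot V$, and it is the controlled analogue of the Siebenmann \cite{Si1} style gluing for hereditary homotopy equivalences. In codimension $\ge 3$ the alternative hypothesis (CE plus $1$-UV fibers) admits a shorter argument, since $1$-UV fibers allow enough engulfing to perform the patching with only ANR tools.

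Granting (0), parts (1)–(3) follow by squeezing the cylinder factor. For (1), define $f_t$ on $V-M$ by conjugating the cylinder push $(y,s)\mapsto(y,\,s+\lambda(v)\,t(1-s))$ through $\varphi$, where $\lambda$ is a cutoff which vanishes on the preassigned undisturbed neighborhood $W$ and equals $1$ outside a slightly larger neighborhood, and extend by the identity on $M$; continuity at $M$ holds because the cell-like fibers $F_x-x$ shrink to $x$ as $s\to 1$, the containment $f_1(V)\subset U$ is arranged by shrinking $W$, and $rf_t$ is $\epsilon$-close to $r$ thanks to the $r$-control of $\varphi$. For (2), use the opposite push $(y,s)\mapsto(y,(1-t)s)$ to produce a strong deformation retraction of $V-M$ onto $\dot V$ rel $\dot V$, with the same $r$-control. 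For (3), apply (0) to both $V$ and $V_0$: under $\varphi$, the sub-region $V_0-M$ corresponds up to controlled proper homotopy to a tail $\dot V\times[\tau,1)$ for some proper cutoff $\tau:\dot V\to(0,1)$, and therefore $V-\mathring V_0$ corresponds to the cylinder $\dot V\times[0,\tau]$; both inclusions $\dot V\hookrightarrow V-\mathring V_0$ and $\dot V_0\hookrightarrow V-\mathring V_0$ are proper homotopy equivalences, exhibiting $(V-\mathring V_0;\dot V_0,\dot V)$ as a proper $h$-cobordism.
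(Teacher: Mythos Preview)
Your overall strategy matches the paper's: isolate the $\epsilon$-controlled proper homotopy equivalence $(V-M,\dot V)\simeq(\dot V\times[0,1),\dot V\times 0)$ (the author's property~(0)), establish it from the cone-like hypothesis, and then read off (1), (2), (3) by pushing along the cylinder coordinate. Part (3) in the paper is dispatched in one line as ``a straightforward consequence of parts (1) and (2)'', which is what your cylinder-tail argument is doing, and the paper likewise notes that for mapping cylinder TRN's (1) and (2) are trivial.

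Where your sketch diverges is in the mechanism for actually proving~(0). The paper does not patch local equivalences over a handle decomposition of $M$; instead it works chart-by-chart over $\mathbb R^m$ and introduces a \emph{resolution} $r'=rp:V\to\mathbb R^m$, replacing each point $x\in M$ by a small disc $D_x\subset U\approx\mathbb R^m\times 2B^q$ with boundary sphere $\dot D_x$. This converts the noncompact proper-shape datum on $F_x-x$ into a compact shape question on the annulus $A_x=F_x'-\operatorname{int}D_x$, and one shows that both $\dot F_x\hookrightarrow A_x$ and $\dot D_x\hookrightarrow A_x$ are shape equivalences. The passage from these fiberwise shape equivalences to the global controlled proper homotopy equivalences $\dot V\hookrightarrow V-\mathring U$ and $\dot U\hookrightarrow V-\mathring U$ is then handled by a Whitehead-type theorem for shape, stated separately in the Appendix. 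Your account of this step (``shape equivalences between nearby compacta are realized by genuine homotopy equivalences of their small ANR neighborhoods'', then patch over handles) is exactly the place where the paper deploys real machinery, and your version is underspecified: the resolution trick is what makes the fiberwise shape data compact enough to feed into an ANR lifting argument, and the Appendix theorem is what turns fiberwise shape equivalences into a global controlled proper homotopy equivalence. A minor further point: invoking a handle decomposition of $M$ is unnecessary and unavailable in dimensions $4,5$; the paper's chart-by-chart formulation avoids this. Finally, in your construction of $f_t$ you ``conjugate'' through $\varphi$, but $\varphi$ is only a proper homotopy equivalence, not a homeomorphism, so the definition of $f_t$ and its continuous extension over $M$ need to be set up via the deformation retractions of part~(2) of the resolution proposition rather than by literal conjugation.
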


\indent Part (3) is a straightforward consequence of parts (1) and (2). The
remainder of this section is concerned with proving parts (1) and (2) for
cone-like TRN's.

\indent Before proceeding to the proof, we make some brief asides. The first
is to point out that in the definition of cone-like TRN, if one only assumes
that $r:V\rightarrow M$ is CE instead of conelike, then the fibers $F_{x}$ and
their boundaries $\dot{F}_{x}$ all have the cohomology properties one would
expect. Namely, by duality, $\check{H}^{\ast}(\dot{F}_{x})\approx H^{\ast
}(S^{q-1})$ and $\check{H}^{\ast}(F_{x}-x,\dot{F}_{x})=0$ (here $\check
{H}^{\ast}$ denotes \v{C}ech cohomology). See details below. Also in
codimension $\geq3$, $F_{x}-x$ is $1$-UV. However, as Example 2 shows,
$\dot{F}_{x}$ may not have the shape of $S^{q-1}$.

\indent If one is only interested in establishing the non-proper, codimension
$\geq3$ case of part (3) above, there is an especially simple proof, called to
my attention by Alexis Marin.\medskip

\begin{proposition}
[\textbf{Illustrative Proposition}]Suppose $(V_{i}^{m+q},M^{m},r_{i})$,
$i=0,1$, are \textbf{cell-like} TRN's of $M$, with $V_{0}\subset V_{1}$ and
$q\geq3$, such that all fiber boundaries $\{\dot{F}_{x,i}=r_{i}^{-1}%
(x)\cap\dot{V}_{i}\mid x\in M$, $i=0,1\}$ are $1$-UV. Then the inclusion
$\dot{V}_{0}\hookrightarrow V_{1}-M$ is a homotopy equivalence. Hence, if
$V_{0}\subset\mathring{V}_{1}$, the difference $(V_{1}-\mathring{V}_{0}%
;\dot{V}_{0},\dot{V}_{1})$ is an $h$-cobordism (using the additional parallel
facts that $\dot{V}_{i}\hookrightarrow V_{i}-M$ are homotopy equivalences,
$i=0,1$).
\end{proposition}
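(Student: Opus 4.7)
\medskip

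\noindent\textbf{Proof plan.} The plan is to factor the desired equivalence $\dot{V}_{0}\hookrightarrow V_{1}-M$ as the composition
\[
\dot{V}_{0}\;\hookrightarrow\;V_{0}-M\;\hookrightarrow\;V_{1}-M,
\]
show each arrow is a weak homotopy equivalence, and invoke the Whitehead theorem (all four spaces are ANRs, being open or bordered topological manifolds).

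The second arrow is essentially geometric: since $\dot{V}_{0}$ is a locally flat codimension-$0$ submanifold of $V_{1}$ disjoint from $M$, it admits a bicollar in $V_{1}-M$, and radially collapsing the outer half of that collar gives a strong deformation retraction of $V_{1}-M$ onto $V_{0}-M$. The case $V_{0}=V_{1}$ is trivial; the generic case $V_{0}\subset\mathring{V}_{1}$ needed for the $h$-cobordism conclusion is exactly when this collar is available.

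For the first arrow, and more generally to show $\dot{V}_{i}\hookrightarrow V_{i}-M$ is a weak equivalence for each $i\in\{0,1\}$, I would combine the cell-likeness of $r_{i}$ with the $1$-UV and codimension $\geq 3$ hypotheses. Cell-likeness of $r_{i}$ together with duality (as recalled in the aside following Proposition~\ref{Prop (homotopy proposition)}) forces $\check{H}^{\ast}(\dot{F}_{x,i})\cong H^{\ast}(S^{q-1})$ and $\check{H}^{\ast}(F_{x,i}-x,\dot{F}_{x,i})=0$; with $1$-UV adjoined and $q\geq 3$, each $\dot{F}_{x,i}$ in fact has the shape of $S^{q-1}$, and each pair $(F_{x,i}-x,\dot{F}_{x,i})$ is shape equivalent to $(\dot{F}_{x,i}\times[0,1),\dot{F}_{x,i}\times 0)$. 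A Mayer--Vietoris / patching construction over a locally finite cover of $M$ by small coordinate charts, controlled fiberwise by $r_{i}$ and its cell-like preimages, then assembles the fiberwise deformations into a global deformation of $V_{i}-M$ onto $\dot{V}_{i}$. Once the main claim is in hand, the $h$-cobordism conclusion drops out: with $W=V_{1}-\mathring{V}_{0}$, the collar retraction makes $W\hookrightarrow V_{1}-M$ a homotopy equivalence, and both $\dot{V}_{j}\hookrightarrow V_{1}-M$ ($j=0,1$) are homotopy equivalences (for $j=1$ by the first arrow, for $j=0$ by the main claim), so two-out-of-three yields $\dot{V}_{j}\hookrightarrow W$ as homotopy equivalences.

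The main obstacle is promoting the shape-level information about individual fibers $\dot{F}_{x,i}$ to a genuine weak homotopy equivalence $\dot{V}_{i}\hookrightarrow V_{i}-M$. The $1$-UV hypothesis is exactly what rules out pathologies like Example~\ref{ex: cell-like/cone-like}, where a fiber boundary $\dot{F}_{x}\approx S^{1}\times B^{m}$ would inject spurious $\pi_{1}$ into $V-M$ that cannot be detected in $\dot{V}$; in codimension $q\geq 3$, $1$-UV together with cell-likeness is just what is needed to fuse the local shape-$S^{q-1}$ data into a global homotopy equivalence, which is why the proposition is \emph{illustrative} rather than fully general.
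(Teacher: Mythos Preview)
Your factorization has a genuine gap at the second arrow. A bicollar of $\dot{V}_{0}$ in $V_{1}-M$ lets you deformation retract only the collar region onto $\dot{V}_{0}$; it does \emph{not} retract the rest of $V_{1}-\mathring{V}_{0}$ anywhere. Asserting that $V_{1}-M$ deformation retracts onto $V_{0}-M$ is the same as asserting that $\dot{V}_{0}\hookrightarrow V_{1}-\mathring{V}_{0}$ is a homotopy equivalence, which is exactly half of the $h$-cobordism conclusion you are trying to establish. So this step is circular, and the factorization collapses.

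The paper's proof avoids the factorization altogether and attacks $\dot{V}_{0}\hookrightarrow V_{1}-M$ directly via duality. First, Lacher's theorem makes each $r_{i}:V_{i}\to M$ a homotopy equivalence. Next, general position ($q\geq 3$) gives $\pi_{1}(V_{i}-M)\cong\pi_{1}(V_{i})$, and the $1$-UV hypothesis on the $\dot{F}_{x,i}$ makes $r_{i}|:\dot{V}_{i}\to M$ a $\pi_{1}$-isomorphism; hence all universal covers are compatible. On the covers one then compares Lefschetz duality $H^{m+q-\ast}_{c}(\widetilde{V}_{0})\cong H_{\ast}(\widetilde{V}_{0},\widetilde{\dot{V}}_{0})$ with Alexander duality $H^{m+q-\ast}_{c}(\widetilde{M})\cong H_{\ast}(\widetilde{V}_{1},\widetilde{V}_{1}-\widetilde{M})$; since $\widetilde{M}\hookrightarrow\widetilde{V}_{0}$ is a homotopy equivalence, the Five Lemma on the long exact sequences of the two pairs yields $H_{\ast}(\widetilde{\dot{V}}_{0})\cong H_{\ast}(\widetilde{V}_{1}-\widetilde{M})$, and Hurewicz--Whitehead finish. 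This is the ``especially simple'' argument the proposition is meant to illustrate.

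Your sketch for the first arrow --- upgrading fiberwise shape data to a global deformation of $V_{i}-M$ onto $\dot{V}_{i}$ --- is not the Illustrative Proposition's method at all; it is essentially the more elaborate shape-theoretic route the paper takes in Proposition~\ref{Prop (homotopy proposition)} and its sequel, which the paper explicitly contrasts with the duality shortcut here.
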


\begin{proof}
The cell-like retraction $r_{i}:V_{i}\rightarrow M$ is a homotopy equivalence
by the theorem of Lacher.

The maps $V_{i}-M\hookrightarrow V_{i}$ and $\dot{V}_{i}\overset{\alpha
}{\hookrightarrow}V_{i}\overset{r}{\longrightarrow}M$ induce $\pi_{1}%
$-isomorphisms, the first by general position and the others because $r$ and
$r\alpha$ are $1$-UV surjections \cite[p.505]{La2}. Hence all universal covers
are compatible, and we have covering TRN's
\begin{figure}[th]
\begin{center}
\includegraphics{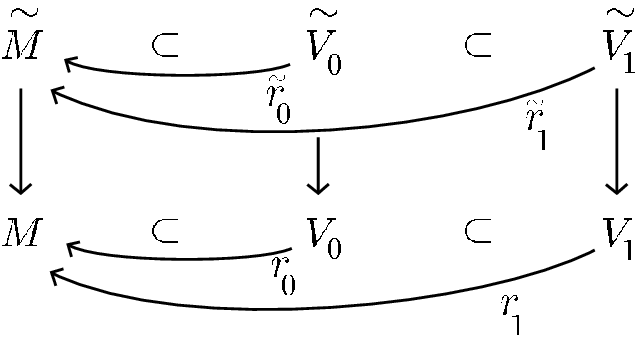}
\end{center}
\end{figure}

It suffices to show $\widetilde{\dot{V}}\hookrightarrow\widetilde{V}%
_{1}-\widetilde{M}$ induces homology isomorphisms, for then the theorems of
Hurewicz and Whitehead apply. The topmost square below represents
Lefschetz/Alexander duality and its naturality, and the remaining squares are
the homology sequence of a pair. (\textbf{Note. }For simplicity, the $\sim$'s
are omitted from the diagram.)%
\[
\begin{CD}
H_{v}^{m+q-\ast}(V_{0})                 @<{\approx}<\text{inclusion}^{\ast}<   H_{c}^{m+q-\ast}(M)\\
@V\text{(Lefschetz Duality)} V{\approx}V                  @V{\approx}V\text{(Alexander Duality)}V\\
H_{\ast}(V_0,\dot{V_{0}})               @>{\approx}>>   H_{\ast}(V_1,V_{1}-M)\\
@V{\partial}VV                                          @V{\partial}VV\\
H_{\ast-1}(\dot{V_{0}})               @>{\approx}>\text{(Five Lemma)}>   H_{\ast-1}(V_{1}-M)\\
@VVV                                                                    @VVV\\
H_{\ast-1}(V_{0})               @>{\approx}>\text{inclusion}_{\ast}>   H_{\ast-1}(V_{1})
\end{CD}
\]

\end{proof}

\indent Unfortunately the above proof has no straightforward generalization to
the proper category and to codimension 2, and it provides no information about
the tracks of the homotopies. For this reason we adopt the following approach,
which is, in a sense, more elementary because it uses no algebra and duality,
but unfortunately is more elaborate, using elementary shape theory.

\indent The following discussion uses the notion of \emph{resolution} of a TRN
$r:V\rightarrow M$, which provides a way of compactifying deleted fibers
$\{F_{x}-x\}$ by inserting a $(q-1)$-sphere in place of $x$. The definition is
local in character. Suppose $r:V^{m+q}\rightarrow\mathbb{R}^{m}$ is a TRN of
$\mathbb{R}^{m}$ $(\mathbb{R}_{+}^{m}$ in the with-boundary case). Let $U$
$\approx\mathbb{R}^{m}\times2B^{q}$ be a neighborhood of $\mathbb{R}%
^{m}=\mathbb{R}^{m}\times0$ in $\mathring{V}$ such that $U$ is closed and
collared in $V$. Let $\lambda:2B^{q}\rightarrow2B^{q}$ be the map
$\lambda(B^{q})=0$, $\lambda|_{\partial2B^{q}}=\operatorname{id}$ and
$\lambda$ extended linearly on radial lines joining $\partial B^{q}$ to
$\partial2B^{q}$ and define $p:V\rightarrow V$ by letting $p|_{U}%
=\operatorname{id}_{\mathbb{R}^{m}}\times\lambda$ and $p|_{V-U}=$ identity.
Define $r^{\prime}=rp:V\rightarrow\mathbb{R}^{m}$. Let $F_{x}^{\prime}$
denote
\[
(r^{\prime})^{-1}(x)=p^{-1}(F_{x}-x)\cup(x\times B^{q})
\]
with distinguished subsets $\dot{F}_{x}^{\prime}=\dot{F}_{x}$, $(D_{x},\dot
{D}_{x})=x\times(B^{q},\partial B^{q})$ and $A_{x}=F_{x}^{\prime
}-\operatorname*{int}D_{x}$.%
\begin{figure}[th]
\begin{center}
\includegraphics{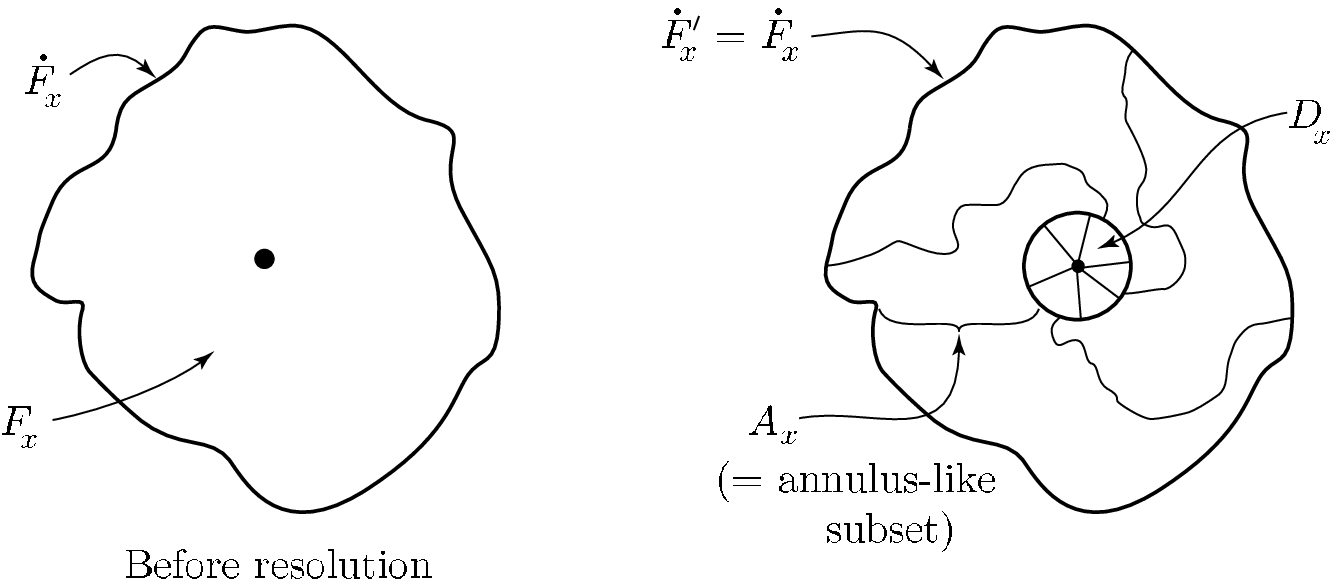}
\end{center}
\end{figure}

It is another straightforward exercise to deduce parts (1) and (2) of
Proposition 3.1 above from part (2) of the following Proposition, by applying
it to successive coordinate charts of $M$ to manufacture the desired deformations.

\begin{proposition}
Suppose $r:V^{m+q}\rightarrow\mathbb{R}^{m}$ is a cone-like TRN, resolved to
$r^{\prime}:V\rightarrow\mathbb{R}^{m}$ as above. Then

\begin{enumerate}
\item for each $x\in\mathbb{R}^{m}$, the inclusions $\dot{F}_{x}%
\hookrightarrow A_{x}$ and $\dot{D}_{x}\hookrightarrow A_{x}$ are shape
equivalences, and

\item the inclusions $\dot{V}\hookrightarrow V-\mathring{U}$ and $\dot
{U}\hookrightarrow V-\mathring{U}$ are proper homotopy equivalences. In fact,
given any majorant map $\epsilon:\mathbb{R}^{m}\rightarrow(0,\infty)$, there
exist deformation retractions $f_{t}:V-\mathring{U}\rightarrow V-\mathring
{U}\;(rel\;\dot{U})$ of $V-\mathring{U}$ into $\dot{U}$, and $g_{t}%
:V-\mathring{U}\rightarrow V-\mathring{U}\;(rel\;\dot{V})$ of $V-\mathring{U}$
into $\dot{V}$, such that both $r^{\prime}f_{t}$ and $r^{\prime}g_{t}$ are
$\epsilon$-close to $r^{\prime}$.
\end{enumerate}
\end{proposition}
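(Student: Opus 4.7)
\medskip

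\noindent \textbf{Proof plan.} The plan is to analyze the topology of $A_{x}$ concretely via the resolution map $p$, deduce the fiberwise shape equivalences of (1) from the cone-like hypothesis, and then globalize these to produce the controlled deformations of (2) by a local-to-global patching argument over coordinate charts in $\mathbb{R}^{m}$.

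First I would observe that $p$ restricts to a homeomorphism of $V - \mathbb{R}^{m} \times B^{q}$ onto $V - M$ that collapses each slice $\{y\} \times B^{q}$ to the point $y$. So $A_{x}$ decomposes as a homeomorphic copy of $F_{x} - x$ (namely $p^{-1}(F_{x} - x)$) to which $\dot{D}_{x} = x \times \partial B^{q}$ is glued as a compactifying $(q-1)$-sphere at the singular $x$-end of $F_{x} - x$; the opposite end is $\dot{F}_{x}$, which $p$ fixes. Part (1) then follows by transporting this compactification through the proper shape equivalence $(F_{x} - x, \dot{F}_{x}) \sim (\dot{F}_{x} \times [0, 1), \dot{F}_{x} \times 0)$ supplied by the cone-like hypothesis: in the model, $\dot{F}_{x} \times [0, 1)$ compactifies to $\dot{F}_{x} \times [0, 1]$ by adding $\dot{F}_{x} \times 1$, and since $\dot{F}_{x}$ is sphere-like this is shape equivalent to capping by $\dot{D}_{x} \approx S^{q-1}$. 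In the cylinder $\dot{F}_{x} \times [0, 1]$ both end inclusions are plainly shape equivalences, and transporting back yields the shape equivalences $\dot{F}_{x} \hookrightarrow A_{x}$ and $\dot{D}_{x} \hookrightarrow A_{x}$.

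For part (2), I would work locally over a small coordinate ball $B \subset \mathbb{R}^{m}$ and then patch. The piece $W_{B} = (r')^{-1}(B) \cap (V - \mathring{U})$ is an ANR fibered by the $A_{x}$, with distinguished ``collars'' $\dot{V} \cap W_{B}$ and $\dot{U} \cap W_{B}$. Applying the cone-like hypothesis continuously in $x \in B$ gives a proper shape equivalence of $W_{B}$ with $(\dot{V} \cap W_{B}) \times [0, 1]$, which on ANRs upgrades to a fiber-preserving homotopy equivalence; inverting this gives local deformation retractions $f_{t}^{B}$ onto $\dot{U} \cap W_{B}$ and $g_{t}^{B}$ onto $\dot{V} \cap W_{B}$, with tracks confined to single fibers. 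I would then take a locally finite cover of $\mathbb{R}^{m}$ by such balls $\{B_{\alpha}\}$, chosen fine enough that $\epsilon$ is essentially constant over each, and splice the local deformations by an inductive push across charts (in the spirit of Kirby--Siebenmann), arranging matters so that individual tracks stay within single fibers and so that $r' f_{t}$ and $r' g_{t}$ remain $\epsilon$-close to $r'$ throughout.

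The main obstacle I anticipate is converting the \emph{fiberwise} shape equivalence of part (1) into a genuine fiber-preserving homotopy equivalence \emph{continuously in the base parameter} $x \in B$, especially in codimension $2$, where the fibers $\dot{F}_{x}$ need not be honest spheres and their homotopy type is only governed through shape. This is precisely where the \emph{proper} part of the cone-like hypothesis (proper shape equivalence of pairs, not just shape equivalence of $\dot{F}_{x} \hookrightarrow F_{x} - x$) earns its keep: only properness controls the behaviour of the shape equivalence near the singular $x$-end uniformly in $x$, and hence only it permits the uniform realization across fibers needed to splice local data into an $\epsilon$-controlled global deformation (cf.\ Example~\ref{ex: cell-like/cone-like}).
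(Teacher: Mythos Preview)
Your approach is broadly sound but is organized in the reverse order from the paper's, and this costs you some work. The paper goes global-to-local: from the cone-like hypothesis it first extracts the $\epsilon$-controlled proper homotopy equivalence $(V-U,\dot V)\sim(\dot V\times[0,1),\dot V\times 0)$ (this is the key ``property~(0)'' of the footnote on page~\pageref{footnote cross reference}), and then \emph{both} parts drop out at once---(2) is immediate, and (1) is obtained by restricting the global controlled deformation to a neighborhood of a single fiber $A_x$, which manufactures a shape inverse to each of $\dot F_x\hookrightarrow A_x$ and $\dot D_x\hookrightarrow A_x$. Under the weaker CS/$1$-UV hypothesis of the Note, the paper instead proves $\dot D_x\hookrightarrow A_x$ by a degree-one excision argument and then passes from (1) to (2) in one stroke via the Whitehead-type shape theorem of the Appendix, with no chart-by-chart splicing. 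Your route---(1) fiber-by-fiber, then Kirby--Siebenmann patching for (2)---is legitimate, but the patching is exactly what the global proper homotopy equivalence (or the Appendix theorem) renders unnecessary; the ``main obstacle'' you flag is dissolved rather than overcome.

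One small gap in your argument for (1): you assert that the proper shape equivalence $(F_x-x,\dot F_x)\sim(\dot F_x\times[0,1),\dot F_x\times 0)$ transports to a shape equivalence of the \emph{compactifications} $A_x$ and $\dot F_x\times[0,1]$, but a proper shape equivalence does not automatically extend over arbitrary end-compactifications. You need to say why $\dot D_x$ is the correct cap at the $x$-end---for instance, because near $x$ the fiber $F_x-x$ sits in $V$ as $x\times(\mathring{2B^q}-0)$, so the resolution $p$ literally adjoins $x\times\partial B^q$ as the boundary of the local product collar, matching the $\dot F_x\times\{1\}$ end of the model. The paper sidesteps this by producing the shape inverse from the ambient global deformation rather than from a fiberwise model.
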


\textbf{Note.} The proof shows that the Proposition holds under the \emph{a
priori} weaker hypothesis that $r$ be CS and each inclusion $\dot{F}%
_{x}\hookrightarrow F_{x}-x$ be a shape equivalence. It also holds if $q\geq
3$, $r$ is CE, and each $\dot{F}_{x}$ is $1$-UV.

\begin{proof}
[Proof of Proposition]Part (1). By the hypotheses and elementary shape theory,
$\dot{V}$ is a strong deformation retract of the noncompact $V-U$ in the
$\epsilon$-controlled manner suggested by part (2) (see below). For each $x$,
this provides a shape map from $A_{x}$ to $\dot{F}_{x}$: just push $A_{x}$
into $V-U$, and homotope it out to $\dot{V}$, as close as desired to $\dot
{F}_{x}$. This is a shape equivalence, the inverse of $\dot{F}_{x}%
\hookrightarrow A_{x}$.

Assuming $r$ is conelike, that is, each $(A_{x}-\dot{D}_{x},\dot{F}_{x})$ is
proper shape equivalent to $(\dot{F}_{x}\times\lbrack0,1),\dot{F}_{x}\times
0)$, then in fact $(V-U,\dot{V})$ is proper homotopy equivalent to $(\dot
{V}\times\lbrack0,1),\dot{V}\times0)$ by a well controlled homotopy, and this
can be used to show each $\dot{D}_{x}\hookrightarrow A_{x}$ is a shape
equivalence, as above.

Consider now the weaker hypothesis of the Note. By excision, each inclusion
$\dot{D}_{x}\hookrightarrow A_{x}$ is degree 1 on $\check{C}$ech cohomology,
and by hypothesis $\dot{F}_{x}$ hence $A_{x}$ has the shape of some sphere,
necessarily $S^{q-1}$. Hence $\dot{D}_{x}\hookrightarrow A_{x}$ is a shape
equivalence. Note that this argument fails when it is not known that $\dot
{F}_{x}$ has the shape of a sphere (c.f. Example 4).

Part (2). Assuming $r$ is cone-like, then part (2) is a quick consequence of
the $\epsilon$-controlled proper homotopy equivalence $(V-U,\dot{V})\sim
(\dot{V}\times\lbrack0,1),\dot{V}\times0)$ mentioned in the second paragraph
above, and in fact there is no need to prove part (1). On the other hand, if
using the hypothesis of the Note, then one wants to know part (1)
$\Rightarrow$ part (2). This implication is a corollary of a Whitehead-type
theorem for shape, which we state in the Appendix.
\end{proof}

\section{Cone-like TRN's are mapping cylinder TRN's}

The purpose of this section is to prove the equivalence of the two definitions
given in \S 1. As already noted, a mapping cylinder TRN is clearly a cone-like TRN.

\begin{theorem}
Suppose $(V^{m+q},M^{m},r_{0})$ is a cone-like topological regular
neighborhood. Suppose $m+q\geq6$, or that $m+q=5$ and the conclusion below
already holds for $(\delta V,\partial M,r_{0}|)$. Then there is a mapping
cylinder retraction $r_{1}:V\rightarrow M$, arbitrarily close to $r_{0}$, such
that $r_{1}^{-1}(\partial M)=r_{0}^{-1}(\partial M)=\delta V$ and
$r_{1}|_{\delta V}=r_{0}|_{\delta V}$ if $r_{0}|_{\delta V}$ is already a
mapping cylinder retraction. Hence $(V,M,r_{1})$ is a mapping cylinder TRN. In
addition there is an arbitrarily small homotopy of cone-like retractions
$r_{t}:V\rightarrow M$, $t\in\lbrack0,1]$, joining $r_{0}$ to $r_{1}$, such
that $r_{t}^{-1}(\partial M)=\delta V$ and $r_{t}|_{\delta V}=r_{0}|_{\delta
V}$ if $r_{0}|_{\delta V}$ is already a mapping cylinder retraction.
\end{theorem}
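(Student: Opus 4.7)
The plan is to build a homeomorphism $\bar h\colon V\to Z(r_{0}|_{\dot V})$ fixing $M\cup\dot V$, so that $r_{1}=\rho\circ\bar h$ becomes the required mapping cylinder retraction. This $\bar h$ will arise from a controlled product structure on the collar end $V\setminus M$, produced by Siebenmann's cellular approximation theorem \cite{Si1} applied to a sequence of proper $h$-cobordisms furnished by Proposition~\ref{Prop (homotopy proposition)}(3).

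Concretely, I would first use the controlled deformation retraction of Proposition~\ref{Prop (homotopy proposition)}(2) to push $\dot V$ slightly inward along $r_{0}$-fibers, producing a smaller cone-like TRN $V_{1}\subset\mathring V$ whose $r_{0}$-fiber diameters are bounded by half those of $V=V_{0}$. Iterating yields a decreasing sequence $V_{0}\supset V_{1}\supset V_{2}\supset\cdots$ with $\bigcap_{n}V_{n}=M$ and geometrically shrinking controls. Proposition~\ref{Prop (homotopy proposition)}(3) makes each slab $W_{n}=V_{n}\setminus\mathring V_{n+1}$ a proper $h$-cobordism between $\dot V_{n}$ and $\dot V_{n+1}$, and the cell-like nature of $r_{0}$ ensures the Whitehead torsion of the inclusion $\dot V_{n}\hookrightarrow W_{n}$ vanishes. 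The topological $s$-cobordism theorem, in the controlled form due to Siebenmann \cite{Si1} (valid because $m+q\geq 6$), then gives a homeomorphism $h_{n}\colon W_{n}\to\dot V_{n}\times[0,1]$ fixing $\dot V_{n}$, with all tracks of $r_{0}$-diameter less than $\varepsilon/2^{n}$. Splicing along $\dot V_{n+1}=\dot V_{n}\times\{1\}=\dot V_{n+1}\times\{0\}$ yields a homeomorphism $h\colon V\setminus M\to\dot V\times[0,1)$, and the summability of the controls together with the cone-like hypothesis forces $h$ to extend continuously to $\bar h\colon V\to Z(r_{0}|_{\dot V})$ by sending the $t=1$ end to $M$ via $r_{0}|_{\dot V}$.

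The homotopy $r_{t}$ joining $r_{0}$ to $r_{1}=\rho\circ\bar h$ is then obtained fiberwise: since both retractions are close, agree on $\dot V\cup M$, and are cone-like, one interpolates inside small normal neighborhoods of $r_{0}$-fibers while preserving the cone-like fiber structure. For the boundary case $\partial M\neq\varnothing$, the construction is carried out rel a mapping cylinder collar of $\delta V$; in the $m+q=5$ setting this is supplied by the stated hypothesis, while in the $m+q\geq 6$ case it comes from applying the theorem inductively to the lower-dimensional TRN $(\delta V,\partial M,r_{0}|)$, so that $r_{1}|_{\delta V}=r_{0}|_{\delta V}$ can be preserved whenever $r_{0}|_{\delta V}$ is already a mapping cylinder retraction. \emph{The main obstacle} is the trivialization step: the cellular approximation must be applied with $\varepsilon$-control measured via $r_{0}$ rather than any intrinsic metric on $W_{n}$, and one must verify that the telescoped homeomorphism extends continuously over $M$. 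Both points hinge critically on the cone-like hypothesis, which pins down the asymptotic shape of the fibers and ensures that $h^{-1}(\dot V\times\{t\})$ Hausdorff-converges to an $r_{0}$-fiber as $t\to 1^{-}$.
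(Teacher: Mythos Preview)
Your overall architecture---build a controlled product structure on $V\setminus M$ and compactify to $Z(r_0|_{\dot V})$---is reasonable, but the mechanism you propose for trivializing the slabs has a genuine gap. You assert that ``the cell-like nature of $r_0$ ensures the Whitehead torsion of the inclusion $\dot V_n\hookrightarrow W_n$ vanishes.'' That is exactly the sort of statement the paper is working \emph{toward}: that cell-like maps and TRN inclusions are simple homotopy equivalences is a principal application of the completed theory (\S 9), and it rests on existence--uniqueness, which in turn consumes the present theorem. At this stage of the logical development you have no tool to compute $\tau(\dot V_n\hookrightarrow W_n)$, and \cite{Si1} does not contain a controlled $s$-cobordism theorem of the kind you invoke. (An infinite swindle over the whole stack might rescue the argument, but you do not set one up, and making the spliced product structure extend continuously over $M$ with the right $r_0$-control is itself nontrivial.) There is also a smaller problem upstream: Proposition~\ref{Prop (homotopy proposition)}(2) retracts $V-M$ \emph{onto} $\dot V$, the wrong direction for producing a smaller TRN, and pushing $\dot V$ inward along an arbitrary collar need not yield a cone-like TRN since the collar is not adapted to $r_0$-fibers.

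The paper takes a different route that sidesteps both issues. It uses radial engulfing, as in Edwards--Glaser \cite{EG}, to produce for each $\varepsilon$ an ambient \emph{isotopy} of $V$ (supported off a neighborhood of $M$) that compresses toward $M$ with tracks $\varepsilon$-close to $r_0$-fibers; the only input needed is the homotopy data of Proposition~\ref{Prop (homotopy proposition)}, and no torsion enters. Iterating and passing to the limit is a Bing shrinking argument that yields the homeomorphism $V\to Z(r_0|_{\dot V})$ directly, with the cone-like homotopy $r_t$ coming for free from the isotopy. Because the nested neighborhoods arise by ambient isotopy, every intervening slab is a product \emph{by construction}; this is precisely how engulfing replaces the $s$-cobordism step you were reaching for.
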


\begin{proof}
This is proved using radial engulfing (PL if desired) to effect a shrinking
argument, just as in Edwards-Glaser \cite{EG}. The homotopy comes for free.
$\underline{\quad\quad\quad}$.
\end{proof}

\section{The Handle Straightening Theorem and Lemma}

The Existence-Uniqueness Theorem is based on the following Handle
Straightening Theorem, which is inspired by Siebenmann's Main Theorem in
\cite{Si1}. In essence, it is gotten by crossing the source manifold in
Siebenmann's theorem with $B^{q}$.

\indent Recall the notation $f:X{\small
{\includegraphics[
height=0.1064in,
width=0.2093in
]%
{unto.eps}%
}%
}Y$ means that domain $f$ is a subset of $X$.

\begin{theorem}
[\textbf{Handle Straightening Theorem}]\label{handle straightening th}Suppose
given a cone-like TRN $(V^{m+q},B^{k}\times\mathbb{R}^{n},r)$, $k+n=m$,
$m+q\geq6$, along with an open embedding $f:B^{k}\times\mathbb{R}^{n}\times
B^{q}{\small
{\includegraphics[
height=0.1064in,
width=0.2093in
]%
{unto.eps}%
}%
}V$ defined near
\[
\vdash\!\dashv\;\equiv B^{k}\times\mathbb{R}^{n}\times0\cup\partial
B^{k}\times\mathbb{R}^{n}\times B^{q}%
\]
such that $f(x,0)=x$ for
\[
x\in B^{k}\times\mathbb{R}^{n};\ f(\partial B^{k}\times\mathbb{R}^{n}\times
B^{q})=\delta V\equiv r^{-1}(\partial B^{k}\times\mathbb{R}^{n})
\]
and $rf=\operatorname{projection}$ on $\partial B^{k}\times\mathbb{R}%
^{n}\times B^{q}$.

Then there exists a triangle of maps

\begin {diagram}
B^{k}\times\mathbb{R}^{n}\times B^{q}  & &         &   \\
\dTo^F_\approx                    & \rdTo^R  &                          &\\
&          &B^{k}\times\mathbb{R}^{n} &\\
&\ruTo_r    &                         &\\
V^{m+q}                               &          &                          &\\
\end{diagram}

\[
\text{(not commutative)\medskip}%
\]

such that

\begin{enumerate}
\item $R$ is a cone-like TRN retraction to $B^{k}\times\mathbb{R}^{n}%
=B^{k}\times\mathbb{R}^{n}\times0$, with $R^{-1}(\partial B^{k}\times
\mathbb{R}^{n})=\partial B^{k}\times\mathbb{R}^{n}\times B^{q}$,

\item $F$ is a homeomorphism such that $F=f$ near $\vdash\!\dashv$,

\item $R=rF$ over $B^{k}\times(\mathbb{R}^{n}-4\mathring{B}^{n})\cup\partial
B^{k}\times\mathbb{R}^{n}$, and

\item $R=\operatorname{projection}$ over $B^{k}\times B^{n}\cup\partial
B^{k}\times\mathbb{R}^{n}$.
\end{enumerate}
\end{theorem}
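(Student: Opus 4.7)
The plan follows Siebenmann's proof of the Main Theorem in \cite{Si1}, modified by carrying along an extra fiber dimension: where Siebenmann produces a product structure on a manifold over $B^k \times \mathbb{R}^n$, we produce one on a TRN over the same base. The overall flow is to reduce to a compact setting via the torus trick, apply a Siebenmann-style approximation to produce a $B^q$-bundle structure, then descend and adjust to match the given partial embedding $f$.

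First, by Theorem 4.1 I would replace $r$ by a nearby mapping cylinder retraction, so $(V;\dot V, B^k \times \mathbb{R}^n)$ is the mapping cylinder of $r|_{\dot V}$; this converts the cone-like hypotheses into explicit mapping cylinder data, analogous to Siebenmann's end-completion setup. Next I would apply the Kirby--Siebenmann torus trick in the $\mathbb{R}^n$-direction: via a standard immersion $T^n \setminus \{*\} \hookrightarrow \mathbb{R}^n$, lift the TRN over $B^k \times \mathbb{R}^n$ to one over $B^k \times (T^n \setminus \{*\})$, and complete to a TRN over the compact base $B^k \times T^n$. In this compact setting I would apply Siebenmann's CE approximation theorem from \cite{Si1} to produce a locally trivial $B^q$-bundle structure close to the given cone-like retraction. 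The Homotopy Proposition 3.1 supplies the controlled (proper) homotopy equivalence $(V - M, \dot V) \sim (\dot V \times [0,1), \dot V \times 0)$ needed to verify Siebenmann's hypotheses, and the dimension hypothesis $m+q \geq 6$ matches his $\geq 5$-dimensional range with the extra fiber.

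Having produced the product structure over the torus, I would uncover via the immersion, using Kirby--Siebenmann handle-straightening to transfer the bundle structure back to $B^k \times \mathbb{R}^n$; this gives the homeomorphism $F$. An engulfing argument (in the spirit of \cite{EG}) supported in a small neighborhood of $\vdash\!\dashv$ then arranges $F = f$ near the box; here the hypothesis that $rf$ is already projection on $\partial B^k \times \mathbb{R}^n \times B^q$ is what permits the engulfing to succeed. Finally I would construct $R$ in pieces: declare $R$ to be projection (via $F$) on $B^k \times B^n \cup \partial B^k \times \mathbb{R}^n$, declare $R = rF$ on $B^k \times (\mathbb{R}^n - 4\mathring{B}^n)$, and interpolate between these two cone-like retractions on the transition annulus using the homotopy provided by Proposition 3.1.

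The main obstacle I expect is the relative torus-trick-plus-CE-approximation step: Siebenmann's CE approximation must be carried out with sufficient relative control to respect both the already-existing product structure over $\partial B^k \times \mathbb{R}^n \times B^q$ (so that the resulting $F$ can be isotoped to coincide with $f$ on that part of $\vdash\!\dashv$) and the boundary stratum $\delta V$. Verifying the cone-like hypotheses of \cite{Si1} in this controlled/relative form, and keeping track of the $(B^q, S^{q-1})$-structure of fibers throughout the torus uncovering, is where most of the technical work will concentrate, exactly paralleling the heart of Siebenmann's argument.
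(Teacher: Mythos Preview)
Your torus-trick outline is essentially the paper's proof of the Handle Straightening \emph{Lemma}, not the Theorem. The Lemma has the roles of (3) and (4) swapped: what falls out of immersing $T^n_0\hookrightarrow\mathbb{R}^n$, capping off, and uncovering is an $R$ that agrees with $rF$ over the \emph{compact core} $B^k\times B^n$ and equals projection \emph{near infinity} $B^k\times(\mathbb{R}^n-4\mathring B^n)$. The Theorem demands the opposite. The paper bridges this by Siebenmann's \emph{inversion device}: apply the Lemma once to compactify $V$ to a TRN over $B^k\times S^n$, then apply the Lemma a second time to the inverted TRN over $B^k\times(S^n-0)$ (identifying $S^n-0\cong\mathbb{R}^n$ via $\theta(y)=y/|y|^2$), and finally extend the resulting $F^\#$ over the missing core using the compact $s$-cobordism theorem. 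Restricting back to $B^k\times\mathbb{R}^n$ yields (3) and (4) as stated. Your proposal omits this two-pass inversion entirely.

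Your substitute---``declare $R$ to be projection over $B^k\times B^n$, declare $R=rF$ outside $4B^n$, and interpolate on the annulus via Proposition~3.1''---does not work as stated. Proposition~3.1 gives controlled deformation retractions \emph{within} a single TRN; it does not produce a cone-like homotopy between two unrelated cone-like retractions (here $rF$ and projection) over an annulus, and there is no reason these two are close. The paper in fact remarks, immediately after the Theorem, that the cone-like homotopy joining $r$ to $r'=RF^{-1}$ is ``not immediate from the proof'' and is deferred to the later local-contractibility section. So the interpolation you invoke is precisely the step the paper flags as unavailable at this point. A smaller point: over the torus the homeomorphism $h:B^k\times T^n\times B^q\to W_1$ is produced via the (infinite, then compact) $s$-cobordism theorem applied after capping off, not by a direct appeal to Siebenmann's CE-approximation theorem; and the preliminary replacement of $r$ by a mapping-cylinder retraction via Theorem~4.1 is unnecessary.
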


\begin{remark}
If we define $r^{\prime}=RF^{-1}$, then

\begin{enumerate}
\item[a)] $r^{\prime}$ is a $q$-disc fiber bundle projection over $B^{k}\times
B^{n}\cup\partial B^{k}\times\mathbb{R}^{n}$, and

\item[b)] $r^{\prime}=r$ over $B^{k}\times(\mathbb{R}^{n}-4\mathring{B}%
^{n})\cup\partial B^{k}\times\mathbb{R}^{n}$.
\end{enumerate}
\end{remark}

\noindent\textbf{Note. }There is a cone-like homotopy joining $r$ to
$r^{\prime}$, but its existence is not immediate from the proof below. The
discussion of such homotopies is deferred until \S .$\underline{\quad\quad}$.

\indent The Theorem above is deduced from the following Lemma using the
inversion device introduced in \cite{Si1}.

\begin{lemma}
[\textbf{Handle Straightening Lemma}]The same data is given, and the same
conclusion is drawn, except that (3) and (4) are replaced by

\begin{enumerate}
\item[(3$^{\prime}$)] $R=rF$ over $B^{k}\times B^{n}\cup\partial B^{k}%
\times\mathbb{R}^{n}$

\item[(4$^{\prime}$)] $R=$ standard projection over $B^{k}\times
(\mathbb{R}^{n}-4\mathring{B}^{n})\cup\partial B^{k}\times\mathbb{R}^{n}$.
\end{enumerate}
\end{lemma}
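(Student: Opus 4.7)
My plan is to adapt the Kirby--Siebenmann torus trick as executed in Siebenmann's Main Theorem \cite{Si1}, carrying the fiber $B^q$ along throughout. The argument breaks into four steps: compactify the non-compact $\mathbb{R}^n$ factor via the torus trick; straighten on the compact base $B^k \times T^n$ by a controlled $h$-cobordism argument; descend to $\mathbb{R}^n$ via the universal cover; and arrange the positioning conditions (3$'$) and (4$'$) by a cut-off.

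First I would choose an immersion $\sigma : T^n - \{pt\} \to \mathbb{R}^n$ that is standard near the puncture and whose image contains $4B^n$. Pulling back the data $(V, B^k \times \mathbb{R}^n, r, f)$ along $\operatorname{id}_{B^k} \times \sigma \times \operatorname{id}_{B^q}$ yields cone-like TRN data over $B^k \times (T^n - \{pt\})$; because $f$ is the identity on the zero-section and a product over $\partial B^k$, the local product structure at the zero-section extends the pull-back across the puncture, producing a closed cone-like TRN $r^\sharp : W \to B^k \times T^n$ together with an extension $f^\sharp$ that agrees with the standard product near the puncture. The standard projection $\pi : B^k \times T^n \times B^q \to B^k \times T^n$ is then a second cone-like TRN of $B^k \times T^n$ sharing the same germ as $r^\sharp$ near the puncture. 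By the Homotopy Proposition of \S 3, the region between the two (after nesting one inside the other via a collar where they agree) is a proper $h$-cobordism over $B^k \times T^n$. Since $\operatorname{Wh}(\pi_1(B^k \times T^n)) = \operatorname{Wh}(\mathbb{Z}^n) = 0$ by Bass--Heller--Swan, and $m + q \geq 6$, the $s$-cobordism theorem supplies a level-preserving homeomorphism $H$ that conjugates $r^\sharp$ to $\pi$, rel a neighborhood of the puncture.

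Next, lift $H$ to the universal cover via $\mathbb{R}^n \to T^n$ to obtain a homeomorphism $\tilde H$ on $B^k \times \mathbb{R}^n \times B^q$, and set $F = f^\sharp \circ \tilde H$ (appropriately restricted), $R = \pi$. Condition (1) is automatic. Condition (2), $F = f$ near $\vdash\!\dashv$, is arranged by requiring $H$ to be the identity on an open set containing the zero-section and the $\partial B^k$ face, which is possible since $r^\sharp$ and $\pi$ already agree there via $f^\sharp$. Condition (4$'$), $R = $ projection on $B^k \times (\mathbb{R}^n - 4\mathring{B}^n) \cup \partial B^k \times \mathbb{R}^n$, holds because $R = \pi$ throughout. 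Condition (3$'$), $R = rF$ on $B^k \times B^n \cup \partial B^k \times \mathbb{R}^n$, requires a final cut-off: over the central $B^n$ region one redefines $F$ to equal the given $f$ and sets $R = rF$ accordingly, interpolating in the annular region $B^k \times (4B^n - \mathring{B}^n)$ via a small homotopy through cone-like retractions furnished by the Homotopy Proposition.

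The main technical obstacle is the compact straightening step. One needs not merely an abstract homotopy equivalence but a level-preserving homeomorphism close to the identity on fibers, which requires running the $s$-cobordism theorem in a controlled cone-like setting along the lines of \cite{Si1}. In codimension $q = 2$ this step is particularly delicate, because the fibers $\dot{F}_x$ need not have the shape of $S^{q-1}$, and the full cone-like hypothesis---the proper shape equivalence of $(F_x - x, \dot{F}_x)$ with $(\dot{F}_x \times [0,1), \dot{F}_x \times 0)$---is essential to produce the $h$-cobordism structure exploited in the second paragraph.
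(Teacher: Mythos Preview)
Your overall architecture---torus trick, pullback via an immersion of the punctured torus, compactify, apply $s$-cobordism, lift---is the same as the paper's, and the identification $\operatorname{Wh}(\mathbb{Z}^n)=0$ is exactly the input needed. But there are two genuine gaps where your outline diverges from what actually works.

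First, the $s$-cobordism theorem does \emph{not} supply a level-preserving homeomorphism conjugating $r^\sharp$ to the standard projection $\pi$. Proposition~\ref{Prop (homotopy proposition)} gives only that the difference between the two TRN's is a proper $h$-cobordism; the $s$-cobordism theorem then identifies the total spaces, but the resulting homeomorphism $h:B^k\times T^n\times B^q\to W_1$ carries $\pi$ to the cone-like retraction $r_1 h$, not back to $\pi$. Consequently you cannot set $R=\pi$. The paper accepts this: it defines $s$ as (a compactification of) $r_1 h$, lifts to a periodic cone-like retraction $S$ on $B^k\times\mathbb{R}^n\times B^q$, and takes $R$ to be a genuinely nontrivial cone-like retraction built from $S$.

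Second, and relatedly, you are missing the furling device. The lift $\tilde H$ of any homeomorphism on the torus is $\mathbb{Z}^n$-periodic, so there is no way to cut it off to agree with $f$ (or with the identity) outside a bounded region while remaining a homeomorphism. The paper's remedy is the non-surjective embedding $j:B^k\times\mathbb{R}^n\hookrightarrow B^k\times 4\mathring{B}^n$ fixing $2B^n$: one sets $R=jS(j\times\operatorname{id})^{-1}$ on $j(B^k\times\mathbb{R}^n)\times B^q$ and extends by standard projection. Continuity of this extension is, as the paper says, ``the crux of the torus device,'' and it is what delivers condition~(4$'$) directly. Condition~(3$'$) then comes not from a cut-off and cone-like homotopy (which the paper explicitly defers to a later section), but from constructing $F$ by chasing the left column of the diagram to get an embedding $\phi$ over $B^k\times 2\mathring{B}^n$ with $r\phi=R$, extending by $f$ over the boundary collar, and finishing by engulfing (using the homotopies of Proposition~\ref{Prop (homotopy proposition)}). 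A smaller point: filling in the puncture to pass from $W_0$ to $W_1$ requires the infinite $s$-cobordism theorem together with capping off (Example~\ref{Ex: capping off}), not merely the local product structure near the zero-section.
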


\begin{proof}
[Proof that Lemma implies Theorem]In this proof, the Handle Lemma is applied
twice, the first time only to compactify $V$.

Let $S^{n}=\mathbb{R}^{n}\cup\infty$. The $F$ and $R$ given by the Handle
Lemma provide, via compactification (see Example \ref{Ex: capping off}), the
$F_{\infty}$ and $R_{\infty}$ in the triangle

\begin {diagram}
B^{k}\times S^{n}\times B^{q}  & &         &   \\
\dTo^{F_{\infty}}_\approx      &\rdTo^{R_{\infty}} &               & \\
&                   &B^{k}\times\mathbb{R}^{n} &    \\
&\ruTo_{r_{\infty}} &               &\\
V_{\infty}                     &                   &               &\\
\end{diagram}

\[
\text{(not commutative)\medskip}%
\]

(The replacement $A\rightsquigarrow A_{\infty}$ for $A=$ any of: $V,F,R,$ or
$\vdash\!\dashv$, suggests compactification, while $A^{\#}$ below suggests the
analogue of $A$ in the inverted context.) Restrict $F_{\infty}$ to a
neighborhood of
\[
\vdash\!\dashv^{\#}\equiv B^{k}\times(S^{n}-0)\times0\cup\partial B^{k}%
\times(S^{n}-0)\times B^{q}%
\]
in
\[
B^{k}\times(S^{n}-0)\times B^{q}%
\]
to get
\[
f^{\#}:B^{k}\times(S^{n}-0)\times B^{q}{\small
{\includegraphics[
height=0.1064in,
width=0.2093in
]%
{unto.eps}%
}%
}V^{\#}\equiv V_{\infty}-r^{-1}(B^{k}\times0).
\]

The Handle Lemma can be applied to TRN's of $B^{k}\times(S^{n}-0)$ by
imagining $S^{n}-0$ identified with $\mathbb{R}^{n}$ by the natural inversion
homeomorphism
\[
\theta:\mathbb{R}^{n}\cup\infty\rightarrow\mathbb{R}^{n}\cup\infty
\]
given by
\[
\theta(y)=y/|y|^{2}\quad\mbox{for}\quad y\neq0,\infty\quad\mbox{and}\quad
\theta(0)=\infty\quad\mbox{and}\quad\theta(\infty)=0.
\]
In such inverted applications, the original subsets $B^{k}\times rB^{n}$ and
$B^{k}\times(\mathbb{R}^{n}-r\mathring{B}^{n})$ of $B^{k}\times\mathbb{R}^{n}$
are replaced by $B^{k}\times(S^{n}-(1/r)\mathring{B}^{n})$ and $B^{k}%
\times((1/r)B^{n}-0)$ of $B^{k}\times(S^{n}-0)$. (Note: Under this
interpretation of inversion, the homeomorphism $\theta$ does \textbf{not}
explicitly appear anywhere in the following proof).

Apply the Handle Lemma to the TRN $r^{\#}\equiv r_{\infty}|:V^{\#}\rightarrow
B^{k}\times(S^{n}-0)$ to get maps $F^{\#}$ and $R^{\#}$ in the triangle

\begin{diagram}
V_{\infty}-r^{-1}(B^{k}\times0)\equiv V^{\#} &  & \\
&\rdTo^{r^{\#}\equiv r_{\infty}|V^{\#}} &\\
\uTo^{F^{\#}}_\approx &  & B^{k}\times(S^{n}-0)\\
&\ruTo_{R^{\#}}   \\
B^{k}\times(S^{n}-0)\times B^{q} &  \\
\end{diagram}

\[
\text{(not commutative)\medskip}%
\]

Thus

(1) $R^{\#}$ is a cone-like TRN retraction to $B^{k}\times(S^{n}-0)$ with
$(R^{\#})^{-1}(\partial B^{k}\times(S^{n}-0))=\partial B^{k}\times
(S^{n}-0)\times B^{q}$,

(2) $F^{\#}$ is a homeomorphism such that $F^{\#}=f^{\#}$ near $\vdash
\!\dashv^{\#}$

(3) $R^{\#}=r^{\#}F^{\#}$ over $B^{k}\times(S^{n}-\mathring{B}^{n}%
)\cup\partial B^{k}\times(S^{n}-0)$, and

(4) $R^{\#}=$ standard projection over $B^{k}\times((1/4)B^{n}-0)\cup\partial
B^{k}\times(S^{n}-0)$.

Extend $r^{\#}$ and $R^{\#}$ using $r_{\infty}$ and $R_{\infty}$ to get

\begin{diagram}
V^{\#} & \rInto  & V_{\infty} &   & \\
& & &\rdTo^{r_{\infty}^{\#}} &\\
\uTo^{F^{\#}}  &   & \includegraphics{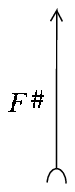} &  &B^{k}\times S^{n}\\
& & & \ruTo_{R_{\infty}^{\#}} &\\
B^{k}\times(S^{n}-0)\times B^{q} &\rInto & B^{k}\times S^{n}\times B^{q}& &\\
\end{diagram}

\[
\text{(not commutative)\medskip}%
\]

We must extend $F^{\#}$ to a homeomorphism $F_{\infty}^{\#}:B^{k}\times
S^{n}\times B^{q}\rightarrow V_{\infty}$. First restrict $F^{\#}$ to
$B^{k}\times(S^{n}-(1/10)\mathring{B}^{k})\times B^{q}$ and then extend over
\[
D\equiv(B^{k}-(1-\epsilon)\mathring{B}^{k})\times(1/10)B^{n}\times B^{q}\cup
B^{k}\times(1/10)B^{n}\times\epsilon B^{q}%
\]
via $f$ (for some small $\epsilon>0)$ to get an embedding
\[
G^{\#}:B^{k}\times S^{n}\times B^{q}-(1-\epsilon)\mathring{B}^{k}%
\times(1/10)\mathring{B}^{n}\times(B^{q}-\epsilon\mathring{B}^{q})\rightarrow
V_{\infty}.
\]
Now the difference $\operatorname*{cl}(V_{\infty}-\operatorname*{image}%
(G^{\#}))$ is a compact $s$-cobordism between manifolds-with-boundary
\[
G^{\#}((1-\epsilon)B^{k}\times(1/10)B^{n}\times\epsilon\mathring{B}^{q})
\]
and $\operatorname*{cl}(\partial V_{\infty}^{\#}-\operatorname*{image}G^{\#}%
)$, with product boundary cobordism.
\[
G^{\#}(\partial\lbrack(1-\epsilon)B^{k}\times(1/10)B^{n}]\times(B^{q}%
-\epsilon\mathring{B}^{q})).
\]
Hence this difference is a product, so $G^{\#}$ extends to $F_{\infty}^{\#}$
as desired.

Finally, taking restrictions to the original sets $V$, $B^{k}\times
\mathbb{R}^{n}\times B^{q}$ and $B^{k}\times\mathbb{R}^{n}$ yields the triangle

\begin{diagram}
V_{\infty} & \lInto & V & & &\\
&&&\rdTo^r&\\
\uTo^{F_{\infty}^{\#}}  &  & \uTo^{F_{\infty}^{\#}|\equiv F_{1}}_\approx &  & B^{k}\times\mathbb{R}^{n}\\
&&&\ruTo_{R_{1}\equiv R_{\infty}^{\#}|} &\\
B^{k}\times S^{n}\times B^{q} &\lInto & B^{k}\times\mathbb{R}^{n}\times
B^{q} & & \\
\end{diagram}

\[
\text{(not commutative)\medskip}%
\]

The maps $F_{1}$ and $R_{1}$ satisfy properties (1) and (2) of the Handle
Theorem, along with

(3$^{\prime\prime}$) $R_{1}=rF_{1}$ over $B^{k}\times(\mathbb{R}^{n}%
-\mathring{B}^{n})\cup\partial B^{k}\times\mathbb{R}^{n}$ and

(4$^{\prime\prime}$) $R_{1}=$ standard projection over $B^{k}\times
(1/4)B^{n}\cup\partial B^{k}\times\mathbb{R}^{n}$.

These are clearly equivalent to (3) and (4) of the Handle Theorem completing
the proof that the Handle Straightening Lemma implies the Handle Straightening Theorem.
\end{proof}

\begin{proof}
[Proof of Handle Straightening Lemma]The proof is based on a diagram which
derives from the classic diagram of Kirby-Siebenmann; its immediate
predecessor is the diagram in \cite{Si1}.

To make certain constructions precise, we make two preliminary modifications
in the given data. First, by compression toward $\vdash\!\dashv$ in
$B^{k}\times\mathbb{R}^{n}\times B^{q}$, we arrange that $f$ is defined on a
neighborhood of $(B^{k}-(1/2)\mathring{B}^{k})\times\mathbb{R}^{n}\times
B^{q}\cup B^{k}\times\mathbb{R}^{n}\times(1/2)B^{q}$ in $B^{k}\times
\mathbb{R}^{n}\times B^{q}$. Second, by redefining $r$ over $B^{k}%
\times4\mathring{B}^{n}$ by conjugation, we arrange that $rf$ is standard
projection over $(B^{k}-(1/2)\mathring{B}^{k})\times3B^{n}\cup\partial
B^{k}\times\mathbb{R}^{n}$. Clearly there is no loss in proving the Lemma for
these modified $r$ and $f$.

The diagram is constructed essentially from the bottom up. All the right hand
triangles commute, as do all the squares but two: the one below $h$ and the
one containing $F$. The details of the construction follow

\begin{diagram}
\phantom{.} &\lTo &B^{k}\times\mathbb{R}^{n}\times B^{q} & & & \rTo^{R \phantom{long gap}}  &
B^{k}\times\mathbb{R}^{n} & &&\\
\dTo & &\uTo_{j\times\operatorname{id}_{B^{q}}}&  &  &  &\uTo_j & &\luInto(3,5) &\\
& &B^{k}\times\mathbb{R}^{n}\times B^{q} &  & &\rTo^{S \phantom{long gap}} &
B^{k}\times\mathbb{R}^{n} & & &\\
& &\dTo_{e\times\operatorname{id}_{B^{q}}} &  &  & &\dTo_e & &\luInto(2,3)&\\
& &B^{k}\times T^{n}\times B^{q} &  & &\rTo^{s \phantom{long gap}} &
B^{k}\times T^{n} & & &\\
& &\uInto &  &  & &\uInto& \luInto(3,1) & & &B^{k}\times2B^{n}\\
& &B^{k}\times T^{n}\times B^{q} &\rTo_\approx^h &W_{1} & \rTo^{r_{1}}
& B^{k}\times T^{n} & & \ldInto(2,1)\ldInto(2,3) &\ldInto(3,5)\\
&&\uInto & &\uInto  & &\uInto & & &\\
& & B^{k}\times T_{0}^{n}\times B^{q} & \includegraphics{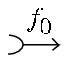}  &W_{0} & \rTo^{r_{0}}
& B^{k}\times T_{0}^{n} & & &\\
& &\dTo_{\alpha\times\operatorname{id}_{B^{q}}} & & \dTo_{\alpha_{0}} &
& \dTo^{\alpha=}_{\operatorname{id}\times \alpha '} & & &\\
& &B^{k}\times\mathbb{R}^{n}\times B^{q} & \includegraphics{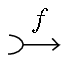} & V^{m+q} & \rTo^r &
B^{k}\times\mathbb{R}^{n} & &&\\
&&&&\uTo&&&&\\
&\rTo^{\phantom{really long gap} F}&&&\phantom{.}&&&&\\
\end{diagram}\medskip

\centerline{\bf Main Diagram}\medskip

\noindent\textbf{Note. }Details in the remainder of the proof are not yet
completely filled in.\newline\newline\textbf{[About }$e$ \textbf{and}
$p$\textbf{.]} Regard $T^{n}$ as the quotient $\mathbb{R}^{n}/(8\mathbb{Z}%
)^{n}$ of $\mathbb{R}^{n}$ where, $\mathbb{Z}$ denotes the integers, and let
$e^{\prime}:\mathbb{R}^{n}\rightarrow T^{n}$ be the corresponding quotient
map. Define $e=\operatorname{id}_{B^{k}}\times e^{\prime}$. Abusively we
regard $B^{k}\times rB^{n}\subset B^{k}\times T^{n}$ for $r<4$. Choose $p\in
T^{n}-2B^{n}$ and let $T_{0}^{n}=T^{n}-p$.\newline\newline\textbf{[About
}$\alpha:B^{k}\times T_{0}^{n}\rightarrow B^{k}\times3\mathring{B}^{n}%
$\textbf{.]} Let $\alpha^{\prime}:T_{0}^{n}\rightarrow3\mathring{B}$ be an
immersion such that $\alpha^{\prime}|_{2B^{n}}=\operatorname{id}$. Define
$\alpha=\operatorname{id}_{B^{k}}\times\alpha^{\prime}$. This makes the four
triangles commute.\newline\newline\textbf{[About }$j:B^{k}\times\mathbb{R}%
^{n}\rightarrow B^{k}\times\mathbb{R}^{n}$\textbf{.]} It is the non-surjective
embedding obtained by restriction of the homeomorphism $J:\mathbb{R}%
^{m}\rightarrow4\mathring{B}^{m}=4\mathring{B}^{k}\times4\mathring{B}^{q}$
which fixes precisely $2\mathring{B}^{m}$ and on each ray from the origin is
linearly conjugate to the homeomorphism $\gamma:[0,\infty)\rightarrow
\lbrack0,-)$ defined by $\gamma|_{[0,-]}=\operatorname{id}$ and $\gamma
(x)=\underline{\quad\quad\quad}$. \newline\newline\textbf{[About }$W_{0}$,
$r_{0}$, $\alpha_{0}$ and $f_{0}$\textbf{.]} These are defined via pullback.
Thus
\[
W_{0}=\{(x,y)\in V\times B^{k}\times(T^{n}-p)\mid r(x)=\alpha(y)\}
\]
and $\alpha_{0}(x,y)=x$ and $r_{0}(x,y)=y$ and $f_{0}\equiv(f|,rf|):B^{k}%
\times(T^{n}-p)\times0\cup\underline{\quad\quad\quad}\rightarrow W_{0}$. We
have that $\alpha_{0}$ is an immersion, $W_{0}$ is a manifold and $r_{0}$ is a
cone-like retraction to $f_{0}(B^{k}\times T_{0}^{n}\times0)$, by the obvious
generalization of \cite[Lemma 2.3]{Si1}. Also, $f_{0}$ is naturally an open
embedding of some neighborhood of $B^{k}\times T_{0}^{n}\times0\cup
(B^{k}-(1/2)\mathring{B}^{k})\times T_{0}^{n}\times B^{q}$, and $r_{0}f_{0}$
is standard projection on this set. \newline\newline\textbf{[Construction of
}$W_{1}$, $r_{1}$ and $h$\textbf{.]} The open embedding
\[
f_{0}|:(B^{k}-(1/2)B^{k})\times T_{0}^{n}\times B^{q}\rightarrow W_{0}%
\]
defines by attachment a manifold
\[
W_{1}^{\prime}\equiv(B^{k}-(1/2)B^{k})\times T^{n}\times B^{q}\cup_{f_{0}%
|}W_{0}%
\]
and an open embedding
\[
f_{1}^{\prime}:\operatorname*{domain}f_{0}\cup(B^{k}-(1/2)B^{k})\times
T^{n}\times B^{q}\rightarrow W_{1}^{\prime}.
\]
Now use infinite $s$-cobordism theorem and capping off (Example
\ref{Ex: capping off}) to get $W_{1},r_{1}$ and $f_{1}$ and then get $\dot{h}$
by the compact $s$-cobordism theorem [Details to be filled out here].\newline%
\newline\textbf{[Construction of }$s$\textbf{.]} The preceding step produced a
conelike retraction
\[
r_{1}h:(B^{k}\times T^{n}-(1/2)B^{k}\times p)\times B^{q}\rightarrow
B^{k}\times T^{n}-(1/2)B^{k}\times p
\]
which is standard projection near $\partial B^{k}\times T^{n}\times B^{q}$.
Let $s$ be the natural compactification of
\[
\omega(r_{1}h)(\omega^{-1}\times\operatorname{id}_{B^{q}})):(B^{k}\times
T^{n}-(0,p))\times B^{q}\rightarrow B^{k}\times T^{n}-(0,p)
\]
where
\[
\omega:B^{k}\times T^{n}-(1/2)B^{k}\times p\rightarrow B^{k}\times
T^{n}-(0,p)
\]
is a homeomorphism which is fixed near \underline{$\quad\quad\quad$}%
.\newline\newline\textbf{[Construction of }$S$ and $R$\textbf{.]} $S$ is the
unique covering cone-like retraction. $R$ is defined by $jS(j\times
\operatorname{id})^{-1}$ on $j(B^{k}\times\mathbb{R}^{n})\times B^{q}$, and is
extended via the identity over all of $B^{k}\times\mathbb{R}^{n}\times B^{q}$.
It is the crux of the torus device that $R$ is continuous. \newline%
\newline\textbf{[Construction of }$F$\textbf{.]} The left hand side of the
diagram from top to bottom defines an open embedding
\[
\phi:B^{k}\times2\dot{B}^{n}\times B^{q}\rightarrow r^{-1}(B^{k}\times2\dot
{B}^{n})\subset V
\]
such that $r\phi=R|_{B^{k}}\times2\dot{B}^{n}\times B^{q}$. Extend $\phi$ over
\underline{$\quad\quad\quad$} by $f$ and then over all of $B^{k}%
\times\mathbb{R}^{n}\times B^{q}$ by engulfing, to get $F$. All the necessary
homotopies for engulfing follow from the Homotopy Proposition (Prop.
\ref{Prop (homotopy proposition)}); recall the engulfing may be PL if desired,
as $\operatorname*{int}V$ is PL triangulable.
\end{proof}

\section{Proof of Existence-Uniqueness Theorem}

This section proves Theorem \ref{existence-uniqueness thm}, without the
cone-like homotopy, but with the well-controlled ambient isotopy.

\begin{proof}
[Sketch of Proof]Existence follows from a good uniqueness theorem;
\textquotedblleft good\textquotedblright\ means we want a relative C-D
statement as in \cite[p.71]{EK}. This good uniqueness theorem follows in
straightforward fashion from the Handle Straightening Theorem
\ref{handle straightening th}, much like the situation in \cite{EK}.
\end{proof}

\section{Local contractibility of the space of cone-like retractions;
cone-like homotopies}

This section is independent of the preceding Sections 2-6, and has no
dimension restrictions. This section plays a role in this paper analogous to
the role of the local contractibility of the homeomorphism group of a manifold
(\cite{Ce}, \cite{EK}) in Siebenmann's paper \cite{Si1}.

\indent Let $M$ be a fixed manifold and $V$ a fixed topological regular
neighborhood of $M$, with distinguished submanifold $\delta V\subset\partial
V$ but \emph{without} a specific retraction. Let $C(V,M)$ be the space of all
cone-like retractions $r:V\rightarrow M$ such that $r^{-1}(\partial M)=\delta
V$, topologized with the majorant topology given by majorant maps on $M$. That
is, given majorant map $\epsilon:M\rightarrow(0,\infty)$, the $\epsilon
$-neighborhood of $r:V\rightarrow M$ is
\[
N(r,\varepsilon)=\{p\in C(V,M)\mid d(p(x),r(x))<\epsilon
(r(x))\;\mbox{for all}\;x\in V\}
\]
where $d$ is the metric on $M$. Although $C(V,M)$ is decidedly non-metric if
$M$ is not compact, it turns out that $C(V,M)$ is closed under Cauchy limits
if $d$ is a complete metric (Compare \cite{Si1}); this fact is not so
essential to us as the following facts.

\indent Call a cone-like retraction $r:V\rightarrow M$ \emph{locally
approximable by bundle }\newline\emph{projections} (\emph{locally
approximable} for short) if each $x\in M$ has an open neighborhood $W$ in $M$
such that $r|:r^{-1}(W)\rightarrow M$ is arbitrarily closely approximable by
disc bundle projections (uniformly, not majorantly). Let $C_{0}(V,M)$ denote
the subset of $C(V,M)$ of all such locally approximable retractions. Of
course, it is a corollary of Section 6 that $C_{0}(V,M)=C(V,M)$ if $\dim
V\geq6$; however, working with $C_{0}(V,M)$ obviates dimension restrictions.

\indent The goal of this section is to show that $C_{0}(V,M)$ is locally
0-connected (defined below) and that a certain cone-like homotopy extension
principle holds, analogous to the isotopy extension principle for
homeomorphisms. Actually $C_{0}(V,M)$ is locally $k$-connected for all $k$ by
a routine adaptation of the Eilenberg-Wilder argument. The torus techniques
for local contractibility fail us in this section so we turn to an adaptation
$\underline{\quad\quad\quad}$

\begin{proposition}
\textbf{(1)} Suppose $r\in C_{0}(V,M)$ and $U\approx\mathbb{R}^{m}$ or
$U\approx\mathbb{R}_{+}^{m}$ is a coordinate chart in $M$. Then $r|_{r^{-1}%
(U)}$ is arbitrarily closely approximable by disc bundle projections
(uniformly here).

\noindent\textbf{(2)} $C_{0}(V,M)$ is closed in $C(V,M)$. (As noted above
$C(V,M)$ is closed in $P(V,M)=$ all proper maps $V\rightarrow M$, but we don't
need this).
\end{proposition}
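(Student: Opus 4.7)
The plan is to derive Part (2) from Part (1) by a short approximation argument, so the substance lies in Part (1), which globalizes the pointwise local approximability built into the definition of $C_0(V,M)$ to a uniform bundle approximation over an entire Euclidean chart $U$. The strategy for Part (1) is inductive patching along a compact exhaustion of $U$.

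For Part (1), fix a tolerance $\epsilon > 0$. By the hypothesis $r \in C_0(V,M)$, each point of $U$ has an open neighborhood on which $r$ is uniformly approximable by disc bundle projections. Cover $U \approx \mathbb{R}^m$ (or $\mathbb{R}_+^m$) by a countable locally finite family of such coordinate balls $W_1, W_2, \ldots$ and pick a compact exhaustion $K_1 \subset K_2 \subset \cdots$ of $U$ with $K_i \subset W_1 \cup \cdots \cup W_i$. Construct disc bundle approximations $\pi_i$ to $r$ over open sets $U_i \supset K_i$ by induction on $i$: given $\pi_i$ on $r^{-1}(U_i)$, choose a bundle approximation $\pi_i'$ on $r^{-1}(W_{i+1})$ so close to $r$ (hence to $\pi_i$ on the overlap $U_i \cap W_{i+1}$) that a small interpolation produces a single bundle projection $\pi_{i+1}$ over some $U_{i+1} \supset K_{i+1}$, agreeing with $\pi_i$ off a thin collar of $\partial W_{i+1}$. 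Budgeting an error $\epsilon/2^{i+1}$ at stage $i$ forces eventual stabilization of $\{\pi_i\}$ on each compact subset of $U$ and yields a limit bundle projection $\pi$ over $U$ that is uniformly $\epsilon$-close to $r$.

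The principal obstacle is the interpolation step: given two disc bundle approximations to $r$ on a common region, both close to $r$ and therefore close to each other, produce a single disc bundle structure that agrees with one of them outside a small collar. This is a small-perturbation uniqueness statement for local bundle approximations, and it is exactly where the section's promised analogue of \v{C}ernavski\u{\i}--Edwards--Kirby local contractibility must enter; because Section 7 is declared independent of Sections 2--6, the torus-device handle straightening of Section 5 is unavailable, and one must instead work with a bundle-approximation version of local contractibility of homeomorphism groups. Once this interpolation lemma is in hand, the patching is routine, parallel to the infinite-process gluings familiar from PL block bundle theory \cite[\S 4]{RSI}.

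For Part (2), suppose $r_n \in C_0(V,M)$ with $r_n \to r$ in the majorant topology. Fix $x \in M$, a coordinate chart $U \ni x$, and a relatively compact coordinate neighborhood $W$ of $x$ in $U$. Given $\epsilon > 0$, choose $n$ with $d(r_n,r) < \epsilon/3$ uniformly on a neighborhood of $r^{-1}(\overline{W})$; for a slightly smaller coordinate neighborhood $W' \subset W$ one then has $r^{-1}(\overline{W'}) \subset r_n^{-1}(W)$. Apply Part (1) to $r_n$ on $U$ to obtain a disc bundle projection $\pi_n$ over $U$ with $d(\pi_n, r_n) < \epsilon/3$. A small fiber-preserving homeomorphism from $r^{-1}(W')$ onto $\pi_n^{-1}(W')$ (close to the identity, since both retract onto $W'$ and lie in a thin collar of it) converts the restriction of $\pi_n$ into a bundle projection $\pi : r^{-1}(W') \to W'$ which is $\epsilon$-close to $r|_{r^{-1}(W')}$, establishing $r \in C_0(V,M)$.
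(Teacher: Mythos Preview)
The paper does not supply a proof of this Proposition; it is stated in \S 7 and immediately followed by the local $0$-connectivity Theorem, whose proof sketch invokes \v{C}ernavski\u{\i} meshing and ``almost handle straightening'', but no argument for the Proposition itself appears. So there is nothing to compare against, and your outline must stand on its own.

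Your strategy for Part (1) is the natural one, and you correctly isolate the crux: the interpolation step, where two nearby disc bundle approximations over an overlap must be spliced into one. You are also right that this is precisely a local-contractibility statement for bundle approximations, and that the torus machinery of \S 5 is off-limits here. But you do not prove this interpolation lemma; you only name it. Since it is the entire content of Part (1), the proof as written is a plan rather than a proof. (This is arguably no worse than the paper's own treatment of the section, but it is a gap.)

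For Part (2), your reduction to Part (1) is the right idea, but the last move is shaky. You have a bundle projection $\pi_n$ defined on $r_n^{-1}(U)$, and you want one on $r^{-1}(W')$; these domains differ. Your proposed ``small fiber-preserving homeomorphism from $r^{-1}(W')$ onto $\pi_n^{-1}(W')$'' is not obviously available: producing it amounts to knowing that two nearby cone-like retractions have ambiently isotopic preimage-manifolds over $W'$, which is close to what the whole theory is trying to establish. A cleaner route is to note that $\pi_n$ is already $\epsilon$-close to $r$ on $r^{-1}(W')\subset r_n^{-1}(U)$, and that $\pi_n^{-1}(W'')\subset r^{-1}(W')$ for some slightly smaller $W''$; then $\pi_n|_{\pi_n^{-1}(W'')}$ is a genuine bundle projection onto $W''$ whose domain equals $r^{-1}(\text{something})$ only after one more adjustment. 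Making that adjustment rigorous again needs a small-homeomorphism lemma of the same flavor as the interpolation step in Part (1). So both parts ultimately rest on the unproved local-contractibility input.
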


\begin{theorem}
The local 0-connectivity (indeed locally $k$-connectivity) of $C_{0}(V,M)$, as
mentioned above.
\end{theorem}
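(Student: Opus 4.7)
The plan is to follow an Eilenberg--Wilder handle-by-handle strategy, since the torus techniques used in the Handle Straightening Theorem are unavailable at this level of generality. The Handle Straightening Theorem~\ref{handle straightening th} itself will serve as the local model, in place of the product-structure uniqueness theorem used by Edwards--Kirby \cite{EK}. Given $r \in C_0(V,M)$, a majorant $\epsilon : M \to (0,\infty)$, and a retraction $r' \in C_0(V,M)$ sufficiently close to $r$, the task is to produce a path from $r$ to $r'$ in $N(r,\epsilon)$. By Proposition~7.1(1), over any coordinate chart $U$ of $M$ both $r|$ and $r'|$ are \emph{uniformly} approximable by disc bundle projections, so locally the comparison between $r$ and $r'$ reduces to a comparison between two close bundle projections.

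Next, over a single handle chart $B^{k} \times \mathbb{R}^{n} \subset M$, let $p_{0}, p_{1}$ be disc bundle projections approximating $r, r'$ respectively. Applying the Handle Straightening Theorem puts $p_{0}$ into standard form; then $p_{1}$, being close to $p_{0}$, is encoded by a map $B^{k} \times \mathbb{R}^{n} \to \mathrm{Homeo}(B^{q}, \partial B^{q})$ near the constant map to $\mathrm{id}$. Local contractibility of $\mathrm{Homeo}(B^{q}, \partial B^{q})$ (\cite{Ce}, \cite{EK}) produces a small null-homotopy of this map, hence a path of bundle projections joining $p_{0}$ to $p_{1}$, which in turn yields a short cone-like path from $r$ to $r'$ over the chart.

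The global homotopy is assembled by induction over a locally finite cover of $M$ by handle-like open sets $\{H_{i}\}$ with finer shrinkings $\{H_{i}^{-}\}$. At stage $i$, one extends the path already built over $\bigcup_{j < i} H_{j}^{-}$ by gluing in the local path produced over $H_{i}$, cut off in the time parameter so that its support lies in $r^{-1}(H_{i} \setminus H_{i-1}^{-})$. Uniform majorant control by $\epsilon$ is preserved because each local modification can be made uniformly $\epsilon|_{H_{i}}$-small. The statement of local $k$-connectivity is proved by the same scheme run parametrically over $S^{k}$, using the higher local contractibility of $\mathrm{Homeo}(B^{q}, \partial B^{q})$ in place of its local path-connectedness.

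The principal obstacle will be ensuring that every intermediate retraction is genuinely cone-like, not merely cell-like. The remedy, analogous to the use of isotopy extension in \cite{EK}, is to arrange each stage of the deformation to coincide \emph{identically} with a disc bundle projection on the support of its modification, so cone-likeness is automatic where the retraction is changing and is inherited from the previous stage elsewhere. Combined with closure of $C_{0}(V,M)$ in $C(V,M)$ (Proposition~7.1(2)), this guarantees that the constructed homotopy lies in $C_{0}(V,M)$ at every instant.
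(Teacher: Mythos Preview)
Your proposal diverges from the paper's approach and contains a genuine gap.

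First, the architectural issue: Section~7 is explicitly stated to be \emph{independent of Sections~2--6} and to carry \emph{no dimension restrictions}; the paper also says outright that ``the torus techniques for local contractibility fail us in this section.'' The Handle Straightening Theorem~\ref{handle straightening th} is a torus-trick result requiring $m+q\geq 6$, so invoking it here both reintroduces the dimension hypothesis and breaks the declared independence. The paper's route is instead via \v{C}ernavski\u{\i} meshing: one proves an ``almost'' local contractibility statement for the space of disc bundle projections (the analogue of Theorem~7.3(A) for PL homeomorphisms), and then passes to genuine local contractibility of $C_{0}(V,M)$ by a limit argument, using Proposition~7.1(2) to know the limit stays in $C_{0}$. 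The Eilenberg--Wilder scheme is then what promotes local $0$-connectivity to local $k$-connectivity.

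Second, the technical gap: your key step ``$p_{1}$, being close to $p_{0}$, is encoded by a map $B^{k}\times\mathbb{R}^{n}\to \mathrm{Homeo}(B^{q},\partial B^{q})$'' is not justified. Two nearby disc bundle projections $p_{0},p_{1}:B^{k}\times\mathbb{R}^{n}\times B^{q}\to B^{k}\times\mathbb{R}^{n}$ need not share fibers; $p_{1}^{-1}(x)$ is some $q$-disc close to $\{x\}\times B^{q}$, not a reparametrization of it. So $p_{1}$ is not of the form $p_{0}\circ(\operatorname{id}\times h_{x})$ for a family $h_{x}\in\mathrm{Homeo}(B^{q},\partial B^{q})$, and local contractibility of $\mathrm{Homeo}(B^{q},\partial B^{q})$ does not directly produce the path you want. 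What is actually needed is (almost) local contractibility of the space of disc bundle projections itself --- precisely the statement the paper isolates and proves by \v{C}ernavski\u{\i}'s non-torus meshing technique before taking the limit.
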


\begin{proof}
This proof is accomplished by a limit argument, by first proving the
\textquotedblleft almost\textquotedblright\ local contractibility of the space
of disc bundle projections. This is completely analogous to my proof by
\textquotedblleft almost handle straightening\textquotedblright, using
\v{C}ernavskii meshing, of the following result.\medskip
\end{proof}

\begin{theorem}
\textbf{(A)} Given any PL manifold $M$ and $\epsilon>0$, there exists
$\delta>0$ such that if $h:M\rightarrow M$ is a PL homeomorphism which is
$\delta$-close to the identity, then $h$ may be PL $\epsilon$-isotoped as
close as desired to $\operatorname{id}_{M}$ (but not to $\operatorname{id}%
_{M}$ by counterexample of Kirby-Siebennman.). This process is canonical
PL.\newline\textbf{(B) }(from \textbf{(A).}) The PL homeomorphism group of $M$
is locally contractible as a topological group (but not as a semisimplicial complex).
\end{theorem}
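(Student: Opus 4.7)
The plan is to prove (A) by a handle-by-handle PL ``almost straightening'' argument modeled on \v{C}ernavskii meshing, and then to deduce (B) as a formal consequence of (A). Fix a PL handle decomposition of $M$ together with an ordering of its handles. For each handle $h_i \cong B^{k_i} \times B^{m-k_i}$, I would apply a PL version of the torus trick to the restriction of $h$ near $h_i$: cover a neighborhood of the core by a chart modeled on $B^{k_i}\times\mathbb{R}^{m-k_i}$, torus-fy the transverse factor via the PL immersion $T^{m-k_i}-p\to\mathbb{R}^{m-k_i}$ used in the Main Diagram of \S5, and produce via the infinite PL $s$-cobordism theorem a small, canonical PL isotopy which trivializes $h$ over that handle \emph{up to arbitrarily small error}, while disturbing only a controlled neighborhood of $h_i$. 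The phrase ``almost straightening'' is essential: exact triviality is forbidden by the Kirby--Siebenmann counterexample, but arbitrary closeness is not.

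Running through all handles in order, and using the controlled supports to keep successive steps from undoing each other, produces a single PL $\epsilon$-isotopy that brings $h$ within some fixed fraction of $\mathrm{id}_M$. A geometrically convergent iteration of the whole procedure then yields a Cauchy sequence of PL isotopies whose concatenation is again PL, is still $\epsilon$-small (because the size bounds form a convergent series), and drives $h$ as close to $\mathrm{id}_M$ as desired. Canonicity in the PL category is preserved because every ingredient---the torus chart, the immersion $\alpha'$, the $s$-cobordism triviality---is a functorial construction out of the given PL data.

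Part (B) falls out of (A) by the standard left-translation argument for topological groups. Given a PL homeomorphism $g$ sufficiently close to $h$, one applies (A) to $h^{-1}g$ (which is close to $\mathrm{id}_M$) and left-multiplies the resulting PL isotopy by $h$; this exhibits a small PL isotopy of $g$ to a PL homeomorphism arbitrarily close to $h$, continuously in the data. Because (A) is canonical, this defines a continuous local contraction of the PL homeomorphism group near $h$, which is exactly local contractibility as a topological group. The parenthetical failure as a semisimplicial complex is a separate combinatorial phenomenon about the simplicial structure and does not affect the topological-group statement.

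The hard part will be the bookkeeping in the handle-by-handle step. The PL category does not enjoy the smoothing flexibility of TOP, so one must control the support of each handle-straightening isotopy extremely tightly and must ensure that the PL $s$-cobordism invocation inside the torus device can be executed in a size- and PL-canonical way, with errors compatible with those introduced by neighboring handles. \v{C}ernavskii meshing is precisely the tool that makes this go through: it shows that the residual non-triviality after each handle can be pushed into a thin collar where it is absorbed by the next round of iteration, and it is what allows the Cauchy argument to converge within the PL category rather than only in TOP.
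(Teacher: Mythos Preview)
First, a point of context: the paper does not actually give a proof of this theorem. It is stated as an analogue whose method of proof (``almost handle straightening, using \v{C}ernavskii meshing'') is meant to illuminate how Theorem~7.2 is proved. So there is no detailed argument in the paper to compare against line by line. That said, the paper does tell us unambiguously what the intended method is, and your proposal diverges from it in a way worth flagging.

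You invoke the name ``\v{C}ernavskii meshing'' but what you actually describe is the Kirby torus device of \S5: torus-fy the transverse factor via the immersion $T^{m-k}-p\to\mathbb{R}^{m-k}$, apply the $s$-cobordism theorem to cap off, lift to the cover, and compactify. That is \emph{not} \v{C}ernavskii's method. \v{C}ernavskii's 1969 proof of local contractibility of the homeomorphism group \cite{Ce} predates the torus trick entirely and proceeds by a direct, elementary meshing of local charts with carefully interleaved push-pull homotopies; there is no torus, no covering-space passage, and no $s$-cobordism theorem in it. The paper is explicit that this distinction matters here: at the start of \S7 it says ``The torus techniques for local contractibility fail us in this section so we turn to an adaptation~\underline{\quad\quad\quad}'', and the proof sketch for Theorem~7.2 then names \v{C}ernavskii meshing as the replacement. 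So the author is deliberately avoiding the torus device for exactly this class of results.

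Whether your torus-based outline could be made to work independently is a separate question, and not obviously answered yes. The torus trick in the PL category is delicate: the capping-off and lifting steps interact with PL structure theory on $T^n$ and its covers, which is where the Kirby--Siebenmann obstruction lives, and it is not clear that the machinery of the Main Diagram survives intact when you demand everything be PL and canonical. Your acknowledgment that only ``almost'' straightening is achievable does not by itself dispatch this; the concern is earlier, at the stage where you manufacture the comparison homeomorphism $h$ on the torus. If you want to pursue a genuine proof, you should look at what \v{C}ernavskii actually does in \cite{Ce} and adapt that---the iteration and limit argument you sketch at the end is much closer in spirit to his approach than the torus paragraph that precedes it.
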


\section{Topological regular neighborhoods of polyhedra in manifolds}

This section sketches the extension of the previous sections to polyhedra in
manifolds. There are two technical points that have to be sorted out before
saying that the definitions of TRN's routinely extend to polyhedra. The first
concern is what is the polyhedral analogue of locally flat. The second
concerns what to do at the boundary $\delta V$, for polyhedral pairs.

\indent A faithful PL embedding $f:(X,Y)\rightarrow(Q,\partial Q)$ of a
polyhedral pair into a PL manifold is \emph{locally homotopically unknotted}
if for each $x\in X$, both deleted links
\[
\operatorname*{lk}(f(x),Q)-\operatorname*{lk}(f(x),f(X))
\]
and (if $x\in Y$)%
\[
\operatorname*{lk}(f(x),\partial Q)-\operatorname*{lk}(f(x),f(Y))
\]
have\emph{ free} $\pi_{1}$ (for each component). For codimension $\geq3$ this
is always true as the $\pi_{1}$'s are trivial by general position. Note that
for $(X,Y)$ a codimension 2 manifold $(M,\partial M)$, this is just the usual
local homotopy unknottedness definition.

\indent A faithful topological embedding $f:(X,Y)\rightarrow(Q,\partial Q)$ of
a polyhedral pair into a topological manifold is \emph{locally tame} if for
each $x\in X$ there is an open neighborhood $(U,\partial U)$ of $f(x)$ in
$(Q,\partial Q)$ such that the embedding $f|:f^{-1}(U,\partial U)\rightarrow
(U,\partial U)$ is PL locally unknotted for some PL manifold structure on
$(U,\partial U)$. Note the PL structure on the source is induced from $X$, but
the PL structures on the $U$'s (for various $x$) need not be compatible. The
unknottedness condition is independent of the PL structure on $U$.

\indent In the definition of TRN's for polyhedra one should replace
\textquotedblleft$(M,\partial M)$ locally flat in $(V,\partial V)$%
\textquotedblright\ with \textquotedblleft$(X,Y)$ locally tame in $(V,\partial
V)$\textquotedblright.

\indent There is another change required in case $Y\neq\varnothing$, because
the condition that $r^{-1}(Y)$ be a TRN of $Y$ is too restrictive, as the
following example shows.

\begin{example}
Let $X$ be an interval, $Y$ the midpoint of $X$, and $\left(  V,\partial
V\right)  =(2$-disc, boundary$)$ as shown. Then $r^{-1}\left(  Y\right)  $
must be disconnected.%
\begin{center}
\includegraphics[
trim=0.000000in 0.000000in -0.056877in 0.000000in,
height=1.5056in,
width=1.5056in
]%
{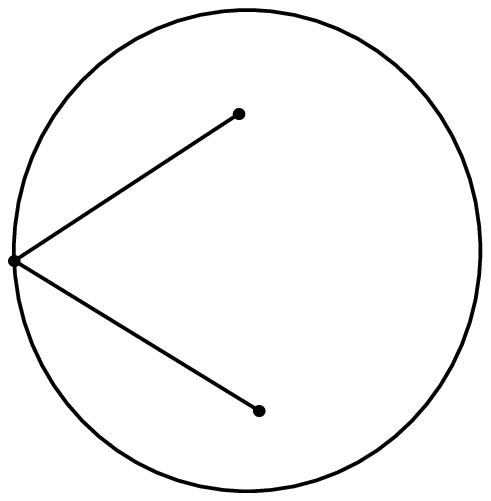}%
\end{center}

\end{example}


\indent This same problem cropped up in \cite{E1} and the remedy is the
same---namely to not require $\delta V$ to be all of $r^{-1}\left(  Y\right)
$. Details are trivial.

\indent Having established these two technical points, then the definition of
TRN's (either mapping cylinder or cone-like) for polyhedra in manifolds is as indicated.

\begin{theorem}
Existence-Uniqueness holds exactly as in the manifold case (with the same
dimension restrictions on $Q$).
\end{theorem}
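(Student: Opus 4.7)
The plan is to run the existence-from-uniqueness reduction of Section 6, with a \emph{polyhedral} Handle Straightening Theorem playing the role of Theorem \ref{handle straightening th}. First I would fix a triangulation $K$ of $X$ compatible with $Y$ and dualize it to a handle decomposition of $X$ in which a typical handle has the form $\sigma\times cL$, where $\sigma$ is a simplex of $K$, $L=\operatorname{lk}(\sigma,K)$, and $cL$ is the open cone on the link. An induction on the simplices of $K\smallsetminus Y$ then reduces the whole theorem to the problem of straightening a given cone-like TRN on one handle at a time, where the restriction of the TRN to the already-handled part has been arranged to be a standard disc bundle; the base case is the manifold Existence-Uniqueness Theorem applied to $(Y,\partial Y)$, together with the manifold theorem applied separately to each top-dimensional open simplex of $X$.

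For the polyhedral Handle Straightening Lemma itself, the input is a cone-like TRN $r\colon V\to\sigma\times cL$ together with an embedding $f$ already standardizing the bundle near $\partial\sigma\times cL\cup\sigma\times L$. Local tameness furnishes a PL structure on a neighborhood $U$ of the handle in $Q$ in which $X\cap U$ is PL and locally homotopically unknotted, so that for every $x\in X\cap U$ each component of the deleted link $\operatorname{lk}(x,U)\smallsetminus\operatorname{lk}(x,X)$ has \emph{free} $\pi_1$. This is precisely the polyhedral substitute for the $1$-UV fiber condition used at the end of Section 3: it is what allows the shape-theoretic Whitehead argument alluded to in the Appendix to run and hence delivers a polyhedral form of the Homotopy Proposition \ref{Prop (homotopy proposition)} with the same $\varepsilon$-controlled tracks.

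With that in hand the torus-furling diagram of Section 5 carries over almost verbatim: replace $B^k\times\mathbb{R}^n$ by $\sigma\times cL$, take an immersion $\alpha\colon\sigma\times T_0^n\to\sigma\times cL$ which is the identity on a collar of $\partial\sigma\times cL\cup\sigma\times L$, form the pullback $W_0$, attach a standard handle over $\partial\sigma\times T^n\times B^q$ via $f_0$, apply the infinite $s$-cobordism theorem together with capping off (Example \ref{Ex: capping off}), and extract the straightening homeomorphism $h$. The final step producing $F$ again uses PL engulfing in the PL-triangulable manifold $\mathring V$, driven by the homotopies supplied by the polyhedral Homotopy Proposition. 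Uniqueness by well-controlled ambient isotopy then follows exactly as in Section 6.

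The hard part will be codimension $2$, where local homotopical unknottedness is a genuine hypothesis rather than a general-position triviality. There one must verify that the free-$\pi_1$ condition on each component of each deleted link is strong enough both to feed the shape-Whitehead argument upgrading cone-likeness to a proper homotopy equivalence in the Homotopy Proposition, and to guarantee that the pullback $W_0$ in the torus diagram remains a topological manifold on which the infinite $s$-cobordism theorem still applies, despite the possibly complicated covering structure arising from the free (and typically not finitely generated) fundamental groups produced along the cone direction of $cL$. A secondary bookkeeping point is the relaxation illustrated by the example at the start of this section: $\delta V$ need not equal $r^{-1}(Y)$, so at each inductive step one must keep track of which boundary strata of $V$ lie over $Y$ and apply the correspondingly stratified form of the Handle Straightening Theorem.
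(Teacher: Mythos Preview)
Your overall framework is right—induction via a polyhedral Handle Straightening Lemma, with local homotopical unknottedness supplying the free-$\pi_1$ input needed for the $s$-cobordism applications—and this is precisely what the paper's one-line proof points to: the extension is ``completely analogous'' to Siebenmann's extension \cite{Si2} of local contractibility from manifolds to locally cone-like TOP stratified sets, with unknottedness making the $s$-cobordism theorem go through at each stage.

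The gap is in your torus step. The immersion $\alpha:\sigma\times T_0^n\to\sigma\times cL$ you write down does not make sense unless $L$ is a sphere, i.e., unless you are already back in the manifold case: the open cone $cL$ on a general compact polyhedron admits no punctured-torus immersion and has no torus compactification, so there is nothing for ``capping off'' to produce. In the stratified scheme of \cite{Si2} the cone factor is never furled; it is carried along as an inert parameter. The local model for $X$ near the interior of a $k$-simplex is $\mathbb{R}^k\times cL$, a handle for the $k$-stratum has the form $B^j\times\mathbb{R}^{k-j}\times cL$, and the torus trick is applied \emph{only} to the euclidean factor $\mathbb{R}^{k-j}$, giving an immersion $B^j\times T_0^{k-j}\times cL\to B^j\times\mathbb{R}^{k-j}\times cL$ that is the identity on the $cL$ coordinate. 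The pullback $W_0$, the capping-off, and the compact and infinite $s$-cobordism steps of \S 5 then run verbatim with $cL$ riding along, and the induction is on the depth of the stratification, so that the TRN theory over the link $L$ is already in hand by the time you reach $\sigma$. Your remarks about codimension~$2$ and about the relaxed $\delta V$ are apt, but they attach to this corrected handle picture rather than to the one you wrote down.
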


Perhaps the quickest proof of this is by analogy:%

\[
\frac{\text{This proof}}{\text{Proof of Th. \ref{existence-uniqueness thm}}%
}\quad\approx\quad\frac{\text{Siebenmann's \cite{Si2}}}{\text{Edwards-Kirby's
\cite{EK}}}%
\]
\newline\indent That is, the extension to the above theorem of the proofs in
\S 5 and \S 6 is completely analogous to the extension to locally cone-like
TOP stratified sets of the local contractibility of the homeomorphism group of
a topological manifold, done by Siebenmann.

\indent The local unknottedness hypothesis ensures that the $s$-cobordism
theorem holds at all applications. Details omitted here.

\indent Replace CW complex by cell complex (i.e. don't need skeletal
filtration that CW complexes have.

\section{CW\ complexes}

\begin{remark}
In the following, one may replace `CW complex' with `cell complex'. In
particular, one doesn't need the skeletal filtration preent in CW complexes.
\end{remark}

It turns out the CW complexes in manifolds have topological regular
neighborhoods \emph{stably}, that is, $X\subset Q$ has a topological regular
neighborhood in $Q\times\mathbb{R}^{s}$ for some $s\geq0$. Furthermore they
are unique nonstably. The most useful application of these facts seems to be a
proof that a CE map ($\equiv$ proper cell-like surjection) of CW complexes is
a simple homotopy equivalence (first proved for homeomorphisms by Chapman
\cite{Ch}). Our discussion below is toward this goal.

\indent All our CW complexes from now on are \emph{finite} (i.e., compact).
This discussion trivially generalizes to nonfinite CW complexes of finite
dimension, but we postpone details for arbitrary CW complexes.

\indent Either definition of topological regular neighborhood given at the
start of the paper is valid with $M$ replaced by a CW complex $X$, subject to
certain provisos. For the mapping cylinder definition, they are: regard
$\partial X=\varnothing=\delta V$ always; replace \textquotedblleft locally
flat\textquotedblright\ by \textquotedblleft each $\dot{F}_{x}$ is
$1$-UV\textquotedblright, and always assume $\dim X\leq\dim V-3$. For the
second definition, the provisos are the same, except that \textquotedblleft
locally flat\textquotedblright\ is replaced by \textquotedblleft$X$ is $1$-LCC
in $V$\textquotedblright, that is, $V-X$ is $1$-LC at $X$. This implies each
$\dot{F}_{x}$ is $1$-UV, and in the presence of mapping cylinder structure,
the conditions are equivalent.

\begin{remark}
If $M_{f}$ is a mapping cylinder for some proper map $f:A\rightarrow B$, then
$M_{f}\times I^{k}$ (with $I^{k}=[-1,1]^{k}$) has a natural mapping cylinder
structure for the map
\[
(f\times\pi)|:(A\times I^{k}\cup M_{f}\times\partial I^{k})\rightarrow
B\times0=B
\]
where $\pi:I^{k}\rightarrow0$ is projection, as suggested by the diagram. The
new fibers $\{F_{b}\times I^{k}\}$ have $UV^{k-1}$ boundaries
\[
\{(F_{b}\times I^{k})^{\cdot}=\dot{F}_{b}\times I^{k}\cup F_{b}\times\partial
I^{k}\},
\]
regardless of the nature of $F_{b}$, because $(F_{b}\times I^{k})^{\cdot}$ has
the shape of $\Sigma^{k-1}\dot{F}_{b}$.%
\begin{figure}[th]
\begin{center}
\includegraphics{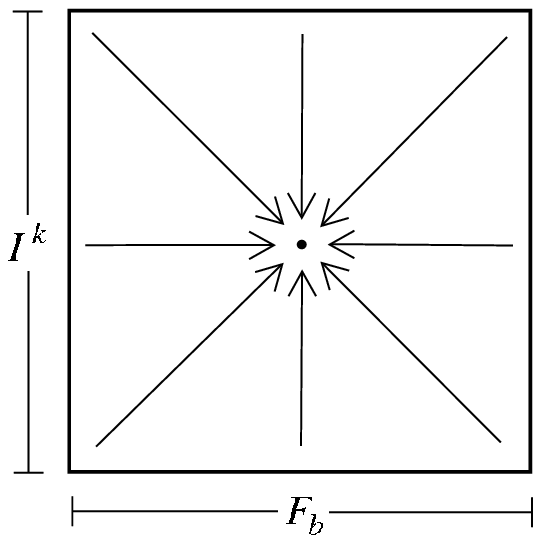}
\end{center}
\end{figure}
\end{remark}

\begin{theorem}
\label{s.h.e. theorem for TRNs}If $V$ is any abstract TRN of CW complex $X$,
as defined above, then $X\hookrightarrow V$ is a simple homotopy equivalence.
\end{theorem}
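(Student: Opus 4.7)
The plan is to combine the Mapping Cylinder Theorem of Section 4 with Chapman's topological invariance of simple homotopy type \cite{Ch}, reducing the problem to the elementary mapping cylinder collapse in the CW category. First I would normalize $V$: after a small cone-like perturbation supplied by Section 4 (first stabilizing $V \rightsquigarrow V \times I^k$ via Remark 9.1 if the ambient dimension is too low, which is harmless since $V \hookrightarrow V \times I^k$ is an elementary collapse), we may assume $(V; \dot V, X, r) \approx (Z(\phi); \dot V, X, \rho)$ where $\phi \equiv r|_{\dot V} : \dot V \to X$. Thus $V$ is literally the topological mapping cylinder of $\phi$.

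Next I would equip the compact manifold $\dot V$ with a finite CW (and, after further subdivision, polyhedral) structure, available in the high-dimensional setting via a handle decomposition, and then perform a cellular approximation of $\phi$ with respect to some fixed CW structure on $X$. The approximation is a small proper homotopy of $\phi$, and its induced adjustment of the topological mapping cylinder is to be realized by a fiberwise homeomorphism, using the control provided by Section 3 together with the $1$-UV condition on the fiber boundaries $\dot F_x$ built into the CW version of the definition. The resulting CW mapping cylinder $Z(\phi)$ then collapses to $X$ cell by cell in the classical Whitehead sense, so $X \hookrightarrow Z(\phi)$ is a simple homotopy equivalence of CW complexes.

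Finally, the homeomorphism $V \approx Z(\phi)$ produced in the first step is a homeomorphism of compact polyhedra, once $V$ is given the polyhedral structure inherited from the mapping cylinder. Chapman's theorem \cite{Ch} promotes this homeomorphism to a simple homotopy equivalence, and composing with the mapping cylinder collapse yields the theorem. The main obstacle is the second step: ensuring that the cellular approximation of $\phi$ can be chosen fine enough (in the majorant sense of Section 3) that it lifts to a homeomorphism of the topological mapping cylinder model, so that the CW collapse of $Z(\phi)$ can legitimately be identified with a simple collapse of $V$ itself. Once this compatibility is in place, Chapman's theorem together with the classical CW mapping cylinder collapse closes the argument.
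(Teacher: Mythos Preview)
Your approach has a genuine gap at the step you yourself flagged. Mapping cylinders of homotopic maps are homotopy equivalent rel ends but \emph{not} homeomorphic in general (e.g., for $\phi_0,\phi_1:[0,1]\to S^1$ with $\phi_0$ constant and $\phi_1(t)=e^{2\pi i t}$, one gets a triangle wedged with a circle versus a disc), and Section~3 supplies only controlled deformation retractions, not homeomorphisms between mapping cylinders. In your setting $Z(\phi)\approx V$ is a manifold, but cellular approximation pushes $\phi$ into skeleta of the \emph{fixed} CW structure on $X$ and is therefore not a small homotopy; the resulting $Z(\phi')$ will typically fail to be a manifold at points of $X$ lying in low-dimensional skeleta, so no homeomorphism $Z(\phi)\approx Z(\phi')$ can exist and you cannot transfer the CW collapse of $Z(\phi')$ onto $X$ back to $V$. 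There is also a structural objection: the paper's purpose in this section is precisely to give a TRN-based proof of (an extension of) Chapman's theorem---see the Corollary immediately following Theorem~\ref{existence-uniqueness thm for CW complexes}---so invoking \cite{Ch} inside the proof of Theorem~\ref{s.h.e. theorem for TRNs}, while not formally circular, undercuts the point.

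The paper's own argument is quite different and avoids both issues: one inducts on the number of cells of $X$; for the top cell $D^n$ one uses TRN uniqueness to split $V$ along a codimension-one submanifold lying over a collar $S^{n-1}\times(-1,1)$ of the attaching sphere; then one applies the inductive hypothesis to the two pieces together with the Sum Theorem for Whitehead torsion. This is the extension to CW complexes of the argument in \cite{E2}, and it stays entirely within the TRN theory developed in the paper.
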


\begin{theorem}
\label{existence-uniqueness thm for CW complexes}Suppose $X\subset Q$ is a CW
complex embedded in a topological manifold, $\partial Q=\varnothing$. Then

\begin{enumerate}
\item (Existence) $X$ has a mapping cylinder TRN in $Q\times\mathbb{R}^{s}$
for some $s\geq0$, and

\item (Uniqueness) If $V_{0}$ and $V_{1}$ are two TRN's of $X$ in $Q$, then
$V_{0}$ is homeomorphic to $V_{1}$ by ambient isotopy of $Q$ which fixes a
neighborhood of $X$.
\end{enumerate}
\end{theorem}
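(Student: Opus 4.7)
The plan is to reduce both parts to the polyhedral Existence--Uniqueness theorem of §8 by a cellular induction, using the ``suspension'' identity $(F_{b}\times I^{k})^{\cdot}=\dot{F}_{b}\times I^{k}\cup F_{b}\times\partial I^{k}$ from the remark preceding \thmref{s.h.e. theorem for TRNs} to keep the fibre boundaries $1$-UV after sufficient stabilisation.

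For existence, I would build the mapping cylinder TRN inductively over the skeletal filtration $X^{(0)}\subset X^{(1)}\subset\cdots\subset X$. The $0$-skeleton admits the trivial TRN given by disjoint coordinate balls. Suppose inductively that $X^{(k-1)}$ has a mapping cylinder TRN $W_{k-1}$ in $Q\times\mathbb{R}^{s}$ with $1$-UV fibre boundaries. Each $k$-cell of $X$ is attached by some continuous map $\phi\colon S^{k-1}\to X^{(k-1)}$; after one additional stabilisation, the homotopy proposition \lemref{Prop (homotopy proposition)} deforms $\phi$ into $\partial W_{k-1}$ and a general-position argument promotes it to a locally tame PL embedding. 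The polyhedral result of §8 then supplies a mapping cylinder TRN of the attached $k$-cell that matches $W_{k-1}$ along $\phi(S^{k-1})$; gluing these cell-neighbourhoods over all $k$-cells produces $W_{k}$, with the $1$-UV condition preserved by the remark above. Iterating over the finitely many cells of $X$ and absorbing all stabilising factors into a single $\mathbb{R}^{s}$ yields the desired TRN.

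For uniqueness, given TRN's $V_{0},V_{1}\subset Q$ of $X$, I would find a locally tame polyhedron $K$ homotopy equivalent to $X$ embedded close to $X$ inside $V_{0}\cap V_{1}$, using the $1$-LCC hypothesis together with simplicial approximation of a homotopy equivalence $K\to X$. By the polyhedral version of \thmref{existence-uniqueness thm} proved in §8, both $V_{0}$ and $V_{1}$ are ambient isotopic in $Q$ to the canonical TRN of $K$ through isotopies fixing $K$. Composing these isotopies carries $V_{0}$ onto $V_{1}$; the cone-like homotopy extension principle of §7 then arranges the result to fix a full neighbourhood of $X$, not just $X$ itself, exactly as at the end of §6.

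The main obstacle I expect is ensuring that the polyhedral approximation $K$ is genuinely compatible with both given TRN's in the uniqueness step: the homotopy equivalence $K\to X$ must be realised by a map whose image lies fibrewise near $X$ in both $V_{0}$ and $V_{1}$ simultaneously, so that the ambient isotopy transporting $V_{i}$ onto the TRN of $K$ does not disturb the core. This is precisely where the cone-like retractions of §7 and the handle-straightening machinery of §5 are needed, together with the observation that two locally tame polyhedral embeddings of $K$ which are homotopic through the $1$-LCC property of the core are ambient isotopic in codimension $\geq 3$. A minor secondary issue is verifying that the gluing in the inductive step of existence preserves cone-likeness across each attaching sphere, which is routine once $1$-UV has been arranged on both sides by stabilisation.
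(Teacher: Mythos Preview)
Your existence argument is in the right inductive spirit, but it misses the key device and unnecessarily invokes \S 8; the paper emphasises that this half ``has nothing to do with the previous theory.'' The crucial trick for the inductive step is this: given a mapping cylinder TRN $V\to Y$ and a cell attached by $f\colon D^{n}\to Q$, stabilise by $\mathbb{R}^{n}$ and use the \emph{graph embedding} $g_{\partial}\colon\partial D^{n}\to \operatorname{int}V\times\partial B^{n}$, $g_{\partial}(x)=(f(x),x)$. This is automatically locally flat because the second coordinate is the identity, so no general-position argument is needed and the possible non-injectivity of $f|_{\partial D^{n}}$ is irrelevant. One then attaches a disc-bundle handle $H$ along $g_{\partial}$ and explicitly ``turns the corner'' of the mapping cylinder retraction across the sub-mapping-cylinder $F$ joining $g_{\partial}(\partial D^{n})$ to $f(\partial D^{n})$. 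Your phrase ``after one additional stabilisation \dots\ a general-position argument promotes it to a locally tame PL embedding'' does not work: embedding $S^{k-1}$ by general position needs target dimension $\geq 2k-1$, which one extra $\mathbb{R}$-factor does not provide, and you give no substitute for the corner-turning step that produces a single mapping cylinder retraction on the union.

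Your uniqueness argument has a more fundamental gap. You propose to find a locally tame polyhedron $K\simeq X$ inside $V_{0}\cap V_{1}$ and appeal to polyhedral uniqueness from \S 8. But \S 8 requires $V_{0}$ and $V_{1}$ to be TRN's \emph{of $K$}, i.e., to carry cone-like retractions onto $K$; the given retractions land in $X$, and nothing you cite manufactures retractions onto $K$. The obstacle you flag is not a technicality but the whole problem. The paper's route bypasses this entirely: engulf $V_{0}$ into $\operatorname{int}V_{1}$ (routine, since $X$ is a codimension~$\geq 3$ spine), and then show that the difference $V_{1}-\operatorname{int}V_{0}$ is an $s$-cobordism. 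The key input here is Theorem~\ref{s.h.e. theorem for TRNs}, which says each inclusion $X\hookrightarrow V_{i}$ is a simple homotopy equivalence; hence $V_{0}\hookrightarrow V_{1}$ is one too, and excising the codimension~$\geq 3$ spine $X$ preserves this. The $s$-cobordism theorem then yields the product structure and the desired ambient isotopy fixing a neighbourhood of $X$.
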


\begin{corollary}
[to Theorem \ref{s.h.e. theorem for TRNs} and Part 1 of Theorem
\ref{existence-uniqueness thm for CW complexes}]A homeomorphism
$h:X\rightarrow Y$ of CW complexes is a simple homotopy equivalence.

\begin{proof}
[Proof of Corollary]Let $V$ be a TRN of $Y$, by Theorem 2. Then Theorem 1 says
that both the inclusion $\eta:Y\rightarrow V$ and the embedding $\eta
h:X\rightarrow V$ are simple homotopy equivalences, hence so is $h$.
\end{proof}
\end{corollary}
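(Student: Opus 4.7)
The plan is simply to combine Theorem~\ref{s.h.e. theorem for TRNs} and Theorem~\ref{existence-uniqueness thm for CW complexes}(1) via the obvious observation that a homeomorphism $h \colon X \to Y$ transports TRN-structure. First I would embed $Y$ in a manifold: any finite CW complex sits inside some $\mathbb{R}^{N}$, and then by Theorem~\ref{existence-uniqueness thm for CW complexes}(1) there is a mapping cylinder TRN $V$ of $Y$ in $\mathbb{R}^{N}\times\mathbb{R}^{s}$, with some proper retraction $r\colon V\to Y$. Let $\iota\colon Y\hookrightarrow V$ denote the inclusion.

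Next I would transfer this TRN structure across $h$. Consider the embedding $\iota h\colon X\to V$ (whose image equals $\iota(Y)$) together with the retraction $r'=h^{-1}r\colon V\to X$. Every condition in the definition of a TRN is preserved by postcomposing with the homeomorphism $h^{-1}$: the fibers $r'^{-1}(x)=r^{-1}(h(x))$ are literally the fibers of $r$, so cone-like-ness (resp.\ the mapping cylinder condition) holds for $r'$ exactly as for $r$; the $1$-LCC/local tameness hypothesis for $\iota h(X)=\iota(Y)$ in $V$ is the same hypothesis as for $\iota(Y)$. Thus $(V,X,r')$ is an abstract TRN of $X$.

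Now I would invoke Theorem~\ref{s.h.e. theorem for TRNs} twice, once for each structure: the inclusion $\iota\colon Y\hookrightarrow V$ is a simple homotopy equivalence, and the embedding $\iota h\colon X\to V$ is a simple homotopy equivalence. Since simple homotopy equivalences are closed under composition and satisfy 2-out-of-3 (a simple homotopy equivalence has a simple homotopy inverse, and we may compose such an inverse of $\iota$ with $\iota h$ to recover $h$ up to homotopy), we conclude that $h\colon X\to Y$ is a simple homotopy equivalence.

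There is essentially no hard step once the two preceding theorems are in hand; the only thing to check is that the TRN axioms transfer under a homeomorphism of the target, and this is immediate because $h$ alters only the relabelling of point inverses, not their topology, the ambient manifold $V$, or the embedding of the polyhedron. So the corollary reduces to a one-line formal argument built on top of Theorems~\ref{s.h.e. theorem for TRNs} and~\ref{existence-uniqueness thm for CW complexes}.
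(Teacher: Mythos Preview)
Your proof is correct and follows exactly the same route as the paper: obtain a TRN $V$ of $Y$ via Theorem~\ref{existence-uniqueness thm for CW complexes}(1), then apply Theorem~\ref{s.h.e. theorem for TRNs} to both $\iota\colon Y\hookrightarrow V$ and $\iota h\colon X\to V$, and conclude by 2-out-of-3. The only difference is that you spell out explicitly why $(V,X,h^{-1}r)$ is an abstract TRN of $X$, which the paper leaves implicit.
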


\begin{proof}
[Proof of Theorem \ref{s.h.e. theorem for TRNs}]This is just an extension to
CW complexes of an argument in \cite{E2}. One inducts on the number of cells
in $X$, and uses TRN uniqueness to accomplish the splitting of $V$ over
$S^{n-1}\times0$ in $S^{n-1}\times(-1,1)=$ open collar neighborhood of
$\infty$ in the last open cell of $X$. Once $V$ is split, one applies
induction and the Sum Theorem.
\end{proof}

\begin{proof}
[Proof of Theorem \ref{existence-uniqueness thm for CW complexes}%
]\emph{(Uniqueness)}. Pull $V_{0}$ into $\operatorname*{int}V_{1}$ by
engulfing, and then apply the $s$-cobordism theorem to the difference
$V_{1}-\operatorname{int}V_{0}$. It is an $s$-cobordism because $V_{0}\subset
V_{1}$ is a simple homotopy equivalence, and throwing away
$\operatorname*{int}V_{0}$ with its codimension $\geq3$ spine does not change
this.\medskip

\noindent\emph{(Existence)}\textbf{ }Interestingly, the proof has nothing to
do with the previous theory; it is just a straightforward inductive exercise.

We first remark that in the following construction, the advantage of always
working in the ambient manifold $Q\times\mathbb{R}^{s}$ (even if
$Q=\mathbb{R}^{q}$), rather than constructing $V$ in the abstract, is that it
automatically provides the correct framing for the normal bundle of the
embedding $g_{\partial}:\partial D^{n}\rightarrow\partial(V\times B^{n})$
(defined below) which is used to attach the handle. If one didn't choose this
framing correctly, some future $g_{\partial}$ might not have a framing. Thus,
working in $Q$ obviates paying attention to bundle trivializations.

Suppose $Y$ is a CW complex with mapping cylinder TRN $r:V\rightarrow Y$,
where $V$ is a collared, $\operatorname*{codim}0$ submanifold of $Q$. Suppose
$X=Y\cup_{f|_{\partial D^{n}}}f(D^{n})\subset Q$ where $f:D^{n}\rightarrow Q$
is such that $f(D^{n})\cap Y=f(\partial D^{n})$ and $f|_{\operatorname*{int}%
D^{n}}$ is an embedding. Define $g_{\partial}:\partial D^{n}\rightarrow
\operatorname{int}V\times\partial B^{n}\subset\partial(V\times B^{n})$ by
$g_{\partial}(x)=(f(x),x)$ (recall $D^{n}=B^{n})$; it is a locally flat
embedding. Let
\[
F=\{\lambda g_{\partial}(x)\mid x\in\partial D^{n},0\leq\lambda\leq1\}\subset
V\times B^{n}%
\]
be the submapping cylinder of the natural map $g_{\partial}(\partial
D^{n})\rightarrow f(\partial D^{n})\subset Y$, where the fibers $\{\lambda
_{w}\}$ are those of the natural mapping cylinder retraction $r_{1}:V\times
B^{n}\rightarrow Y$.

Let $g:D^{n}\rightarrow Q\times\mathbb{R}^{n}-\operatorname*{int}(V\times
B^{n})$ be a locally flat embedding extending $g_{\partial}$, such that $g$ is
homotopic to $f$ in $V\times\mathbb{R}^{n}$ by a homotopy which agrees in
$\partial D^{n}$ with the straight line homotopy in $F$ joining $g_{\partial}$
to $f|_{\partial D^{n}}$. Then $X^{\prime}\equiv Y\cup F\cup g(D^{n})$ is
homeomorphic to $X$ by the restriction $h|:X^{\prime}\rightarrow X$ of a
homeomorphism $h:Q\times\mathbb{R}^{n}\rightarrow Q\times\mathbb{R}^{n}$
(since homotopy yields isotopy in the trivial range). Thus it suffices to
construct a TRN $V^{\prime}$ for $X^{\prime}$ in $Q\times\mathbb{R}^{n}$.

Let $(H,\delta H)$ be the total space of a normal disc bundle for
$(g(D^{n}),g(\partial D^{n}))$ in $(Q\times\mathbb{R}^{n}-\operatorname*{int}%
(V\times B^{n}),\partial(V\times B^{n}))$. Then $(H,\delta H)\approx
(g(D^{n}),g(\partial D^{n}))\times B^{q}$. Define $V^{\prime}=V\times
B^{n}\cup_{\delta H}H$. We can define mapping cylinder retraction $r^{\prime
}:V^{\prime}\rightarrow X^{\prime}$ by adjusting the mapping cylinder
retraction $r_{1}:V\times B^{n}\rightarrow Y$ to \textquotedblleft turn the
corner\textquotedblright\ near $F$, and then extending over $H$, as follows.
Let $r_{1}^{\prime}:V\times B^{n}\rightarrow Y\cup F$ be the mapping cylinder
retraction obtained from $r_{1}$ as suggested by the following figure (note
the identifications made on the bottom of the rectangles are compatible with
the indicated projections.)
\begin{figure}[th]
\begin{center}
\includegraphics{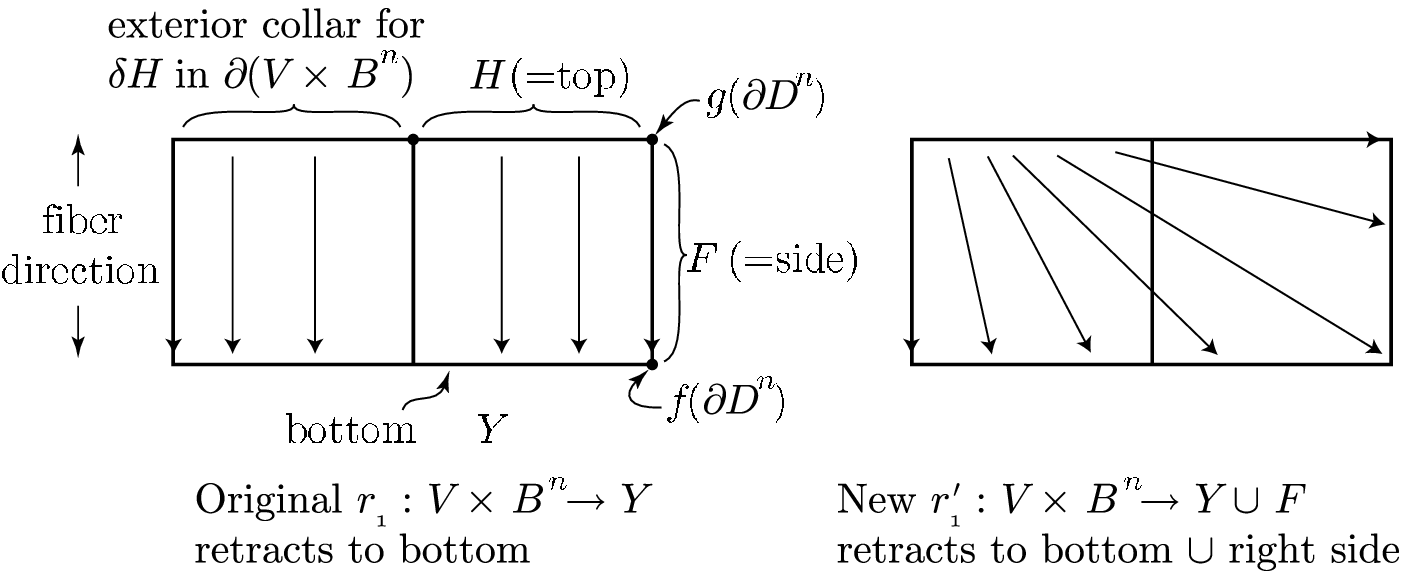}
\end{center}
\end{figure}

In particular, $r_{1}^{\prime}|:\delta H\rightarrow g(\partial
D^{n})$ is standard projection, so $r_{1}^{\prime}$ extends, using standard
projection $H\rightarrow g(D^{n})$, to a retraction $r^{\prime}:V^{\prime
}\rightarrow X^{\prime}$. Clearly $r^{\prime}$ is a mapping cylinder
retraction, and the $1$-UV property follows because $X^{\prime}$ is $1$-LCC in
$V^{\prime}$.

There is an interesting alternative way of defining $r^{\prime}:V^{\prime
}\rightarrow X^{\prime}$, observed by Siebenmann. Let $p_{1}:V^{\prime
}\rightarrow V\times B^{n}\cup g(D^{n})$ be the extension-via-the-identity of
some natural relative mapping cylinder retraction $p_{0}:H\rightarrow\delta
H\cup g(D^{n})$ and let $p_{2}:V\times B^{n}\cup g(D^{n})\rightarrow
X^{\prime}$ (\textbf{not} a retraction but a natural extension of
$r_{1}:V\times B^{n}\rightarrow Y$ such that $p_{2}|:g(\operatorname*{int}%
D^{n})\rightarrow X^{\prime}-Y$ is a homeomorphism. Then $p=p_{2}%
p_{1}:V^{\prime}\rightarrow X^{\prime}$ is a CE map which restricts in
$X^{\prime}$ to a CE map $p|:X^{\prime}\rightarrow X^{\prime}$. In the usual
fashion, let $q:V^{\prime}\rightarrow V^{\prime}$, with $q|_{\partial
V^{\prime}}=\operatorname{id}$, be a map which is a homeomorphism off $F$,
such that $q|_{X^{\prime}}=p|_{X^{\prime}}$. Then $r^{\prime}\equiv
pq^{-1}:V^{\prime}\rightarrow X^{\prime}$ is a well-defined mapping cylinder retraction.
\end{proof}

\section{Concerning mapping cylinder neighborhoods of other compacta}

Consider the following wildly optimistic

\textit{Every compact ENR (= euclidean neighborhood retract) }$X\subset R^{n}%
$\textit{, with }$\dim X\leq n-3$\textit{ and }$R^{n}-X$\textit{ }%
$1$\textit{-LC at }$X$\textit{, has a manifold mapping cylinder neighborhood
which is unique up to homeomorphism. Or at least, every such }$X$\textit{ has
such a unique neighborhood stably, in some }$\mathbb{R}^{n+p}$\textit{. (This
conjecture has a natural Hilbert cube version for compact ANR's).}

This conjecture is stronger than Borsuk's question (the finite dimensional
version) of whether compact ENR's have finite homotopy type; equivalent to
Borsuk's question is whether such $X$ as above have radial neighborhoods in
$\mathbb{R}^{n}$ or even $\mathbb{R}^{n+p}$ (recall $U$ is radial if
$U-X\approx Y\times\mathbb{R}^{1}$ for some compactum. (See \cite{Si3} for
best known implications). Incidentally, the easiest way to prove the
implication: $X$ has finite type $\Rightarrow$ $X$ has a radial neighborhood
stably, is to use the following readily proved stable version of
Geogehan-Summerhill \cite{GS}: two compact subsets $X$ and $Y$ of
$\mathbb{R}^{n}$ have the same (Borsuk) shape $\Leftrightarrow$ the quotients
$\mathbb{R}^{2n+2}/X\approx\mathbb{R}^{2n+2}/Y$ are homeomorphic.

If the conjecture above is true, it would imply that all such $X$ are CE
images of manifolds. It is known conversely that any finite dimensional CE
image of a manifold is an ENR. And such an ENR does have a mapping cylinder
neighborhood stably, namely a quotient of one for the source manifold
stabilized, via the decomposition argument of \cite{Sh}.

\indent It is interesting to compare the Conjecture to two questions raised by
Chapman in the Proceedings of the 1973 Georgia Topology Conference. These are
finite dimensional versions. Let $X$ be a compact ENR and $K,L$ finite cell
complexes.\medskip

\noindent\textbf{Question 1.} \emph{If }$f:K\rightarrow X$\emph{ and
}$g:L\rightarrow X$\emph{ are CE mappings, does there exist a simple homotopy
equivalence }$h:K\rightarrow L$\emph{ such that }$gh\sim f$\emph{?}\medskip

\noindent\textbf{Question 2.} \emph{If }$f:X\rightarrow K$\emph{ and
}$g:X\rightarrow L$\emph{ are CE mappings, does there exist a simple homotopy
equivalence }$h:K\rightarrow L$\emph{ such that }$hf\sim g$\emph{?}\medskip

The answer to Question 1 is yes if the stable \emph{uniqueness} part of the
Conjecture is true; the answer to Question 2 is yes if the stable
\emph{existence} part of the Conjecture is true.

\newpage

\section{\textbf{Appendix: }An extension of some well known homotopy theorems}

This appendix presents a useful generalization of the familiar Whitehead
theorem for weak homotopy equivalences. Using an elementary shape theory
definition, the Theorem encompases Whitehead's Theorem on the one hand
($Z=\operatorname{point}$), and the Lacher-Kozlowski-Price-$\underline
{\quad\quad\quad}$ Theorem for cell-like mappings on the other hand, in
addition to having applications in between.

\indent We work in the category of locally compact metric ANR's and proper
maps (whose point universes need \textbf{not} be ANR's).

\indent A map $f:X\rightarrow Y$ of compact metric spaces (\emph{not}
necessarily ANR's) is a $k$\emph{-shape equivalence}
\label{defn: k-shape equivalence}if both $X$ and $Y$ have finitely many
components and $f$ induces isomorphisms on the homotopy groups up through
dimension $k$. As these homotopy groups are awkward inverse limits, we give
the definition in primitive form (assuming $X$ and $Y$ connected; otherwise
make it hold componentwise). If $X\hookrightarrow L$ and $Y\hookrightarrow M$
are embedded as subsets of ANR's $L$ and $M$ and if $U_{X}$ and $U_{Y}$ are
arbitrary neighborhoods then there are smaller neighborhoods $V_{X}\subset
U_{X}$ and $V_{Y}\subset U_{Y}$ and a map $F:V_{X}\rightarrow V_{Y}$ extending
$P:X\rightarrow Y$, such that for any $i$, $0\leq i\leq k:$

\begin{itemize}
\item \emph{injectivity}: for any map $\alpha:S^{i}\rightarrow V_{X}$ if
$F\alpha\sim0$ in $U_{Y}$, then $\alpha\sim0$ in $U_{X}$, and

\item \emph{surjectivity}: for any map $\beta:S^{i}\rightarrow V_{Y}$, there
is a map $\alpha:S^{i}\rightarrow V_{X}$ such that $F\alpha\sim\beta$ in
$U_{Y}$. Surjectivity can in fact be accomplished by homotopy rel basepoint,
as a consequence of $\pi_{1}$ surjectivity.
\end{itemize}

\indent As usual in shape theory, this definition holds for any pair of
embeddings of $X$ and $Y$ into ANR's if it holds for one pair.

\indent Some authors would define a $k$-shape equivalence as being only
surjective in dimension $k$ (e.g. \cite[p.404]{Sp}, \cite{Ko}) and would prove
the following theorem with $\dim J\leq k$ and $J=K$. However, it seems that
for applications, the form we state it in is perhaps more natural.

If $f:X\rightarrow Y$ is a map and $p:Y\rightarrow Z$ is a surjection, then
$f$ is a $k$\emph{-shape equivalence over} $Z$ if for each $z\in Z$,
$f|:f^{-1}(p^{-1}(z))\rightarrow p^{-1}(z)$ is a $k$-shape equivalence.
\smallskip

\noindent\textbf{Note.} In the following, [proper] means \textquotedblleft
proper\textquotedblright\ is optional. The theorem and corollary are most
believable with proper in place. In fact, on page
\pageref{defn: k-shape equivalence}, I haven't defined $k$-shape equivalent
for non-compact spaces.\smallskip

\begin{theorem}
[{Compare \cite[p.404, Th.22]{Sp} and \cite{Ko}}]\textit{Suppose
}$f:X\rightarrow Y$\textit{ is a [proper] map of locally compact metric ANR's
and }$p:Y\rightarrow Z$\textit{ is a surjection to a separable metric space
}$Z$\textit{. Suppose }$f$\textit{ is a }$k$\textit{-shape equivalence. In the
diagram below, suppose }$J$\textit{ is an arbitrary simplicial complex, }$\dim
J\leq k+1$\textit{, with subcomplex }$L$\textit{, and }$g:L\rightarrow
X$\textit{ and }$h:J\rightarrow Y$\textit{ are maps which make the diagram
commute.}

\textit{Given any majorant map }$\epsilon:Z\rightarrow(0,\infty)$\textit{,
there exists a lift }$g^{\prime}:J\rightarrow X$\textit{ extending }%
$g$\textit{ such that }$pfg^{\prime}$\textit{ is }$\epsilon$\textit{-close to
}$ph$\textit{. Furthermore, if }$K$\textit{ is a subcomplex of }$J$\textit{
with }$\dim K\leq k$\textit{ then }$fg^{\prime}|K$\textit{ may be assumed
}$(p,\epsilon)$\textit{-homotopic to }$h|_{K}$\textit{.}

\begin{diagram}
L & \rTo^g & X \\
\dInto & \ruDotsto^{g'} & \dTo_f \\
J & \rTo^h & Y \\
&  & \dTo_p \\
&  & Z \\
\end{diagram}

\begin{proof}
Standard lifting argument.
\end{proof}
\end{theorem}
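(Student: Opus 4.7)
The plan is a standard skeleton-by-skeleton lifting argument using the fiberwise $k$-shape equivalence hypothesis to produce local lifts and the ANR structure of $X$ and $Y$ to glue them. First I would choose a sufficiently fine subdivision of $J$ so that for each closed simplex $\sigma$ of $J$, the image $ph(\sigma)$ has diameter much smaller than $\min_{ph(\sigma)} \epsilon$; since $ph$ is continuous (and, in the proper case, proper) and $\epsilon$ is a positive majorant map, such a subdivision exists. For each such $\sigma$ fix a basepoint $z_\sigma \in ph(\sigma)$, a neighborhood $U_\sigma \subset Z$ of $ph(\sigma)$ on which $\epsilon$ varies by less than a prescribed factor, and set $W_\sigma = p^{-1}(U_\sigma) \subset Y$ together with an ANR retraction of a neighborhood of $f^{-1}(p^{-1}(z_\sigma))$ in $X$ onto a small neighborhood of that fiber.

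Second, build $g'$ by induction on the skeletal dimension $i$ of $J$ relative to $L$, $0 \le i \le k+1$. At $i = 0$: for a vertex $v \notin L$, the point $h(v)$ lies in the fiber $p^{-1}(ph(v))$, and the $0$-shape equivalence property of $f|_{f^{-1}(p^{-1}(ph(v)))}$ produces a point $x \in X$ with $pf(x)$ $\epsilon$-close to $ph(v)$; set $g'(v) = x$. At the inductive step $i \ge 1$: for an $i$-simplex $\sigma$ the map $g'|_{\partial\sigma}$ is an $(i-1)$-sphere in $X$ whose $f$-image is controlled to lie near $p^{-1}(z_\sigma)$ by the inductive hypothesis, and is moreover null-homotopic there via the restriction of $h|_\sigma$ through $W_\sigma$. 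Because $i-1 \le k$, the injectivity clause of the $k$-shape equivalence (applied to $f|_{f^{-1}(p^{-1}(z_\sigma))}$ and pulled back via the chosen ANR retraction) says $g'|_{\partial\sigma}$ is null-homotopic in a controlled neighborhood of the fiber in $X$, and the ANR extension property then yields an extension $g'|_\sigma$ with $pfg'(\sigma) \subset U_\sigma$. Gluing over simplices is just the usual ANR homotopy extension.

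Third, to obtain the addendum that $fg'|_K$ is $(p,\epsilon)$-homotopic to $h|_K$ when $\dim K \le k$, rerun the induction on the simplices of $K$, but for an $i$-simplex $\tau$ of $K$ ($i \le k$) apply the surjectivity clause of the $k$-shape equivalence in dimension $i$ instead of (or in addition to) injectivity; this produces $g'|_\tau$ together with a small homotopy in $W_\tau$ from $fg'|_\tau$ to $h|_\tau$, which may be combined with the previously constructed homotopies on $\partial\tau$ via homotopy extension in the ANR $Y$. The main obstacle is purely bookkeeping: arranging at each stage that the neighborhoods of the relevant fibers, the ANR retraction constants used to convert shape-theoretic lifts into honest maps into $X$, and the mesh of the subdivision are compatible enough to preserve the $\epsilon$-control over $Z$. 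This is the standard shape-theoretic refinement of the Whitehead argument, and it goes through once the subdivision is chosen fine relative to the moduli of continuity of $\epsilon$, $p$, $h$, and the ANR structures of $X$ and $Y$.
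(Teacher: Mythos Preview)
Your proposal is correct and is precisely the ``standard lifting argument'' that the paper invokes without detail: subdivide $J$ finely relative to $\epsilon\circ p\circ h$, then extend $g$ over the skeleta of $J$ simplex by simplex, using the injectivity clause of the fiberwise $k$-shape equivalence to fill $(i{-}1)$-spheres for $i\le k+1$, and using surjectivity together with ANR homotopy extension to arrange the controlled homotopy $fg'|_K\sim h|_K$ for $\dim K\le k$. The paper provides no further argument, so there is nothing to compare.
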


The following Corollary encompasses several well-known theorems.

\begin{corollary}
Suppose $f:X\rightarrow Y$ is a [proper] map of locally compact metric ANR's
such that for some $k$,

\begin{enumerate}
\item $\dim X\leq k$ and $\dim Y\leq k$, and

\item $f$ is a $k$-shape equivalence over $Z$ for some proper surjection
$p:Y\rightarrow Z$.
\end{enumerate}

Then $f$ is a [proper] homotopy equivalence. In fact, there is a [proper]
homotopy inverse $g:Y\rightarrow X$ such that $fg\sim\operatorname{id}_{Y}$ by
an arbitrarily $p$-small homotopy, and $gf\sim\operatorname{id}_{X}$ by an
arbitrarily $pf$-small homotopy.\newline\newline\noindent\textbf{Proof.}
Routine mapping cylinder-nerve argument.\vspace{1in}
\end{corollary}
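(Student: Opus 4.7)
The plan is to apply the preceding Theorem twice, each time after replacing an ANR by the geometric realization of the nerve of a sufficiently fine locally finite open cover; this is the mapping-cylinder / nerve argument referred to in the proof hint. The dimension hypothesis $\dim X,\dim Y\leq k$ is exactly what is needed so that the relevant nerves have dimension $\leq k$ (for $Y$) or $\leq k+1$ (for $X\times I$), bringing the Theorem's conclusion into play.

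For the first step, I would pick a locally finite open cover $\mathcal{U}$ of $Y$ of arbitrarily small mesh with respect to $p$, and arrange, using $\dim Y\leq k$, that its nerve $N=N(\mathcal{U})$ has dimension $\leq k$. A partition of unity subordinate to $\mathcal{U}$ produces a canonical proper map $\mu\colon Y\to |N|$, and selecting one point of $Y$ in each member of $\mathcal{U}$ and invoking the ANR property of $Y$ gives a proper map $\nu\colon |N|\to Y$ such that $\nu\mu$ is $\mathcal{U}$-close to $\operatorname{id}_Y$ by a small homotopy. Then I would invoke the Theorem with $L=\emptyset$, $J=K=|N|$ and $h=\nu$ to obtain a lift $\widetilde{g}\colon |N|\to X$ with $f\widetilde{g}$ $(p,\epsilon)$-homotopic to $\nu$. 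Setting $g=\widetilde{g}\mu\colon Y\to X$ and concatenating the two small homotopies yields $fg\simeq\operatorname{id}_Y$ by an arbitrarily $p$-small homotopy.

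For the second step, postcomposing the Step 1 homotopy with $f$ supplies a $pf$-small homotopy $H\colon X\times I\to Y$ from $fgf$ to $f$. I would then approximate $X\times I$, relative to its two ends, by the nerve of a sufficiently fine cover of dimension $\leq k+1$ (possible because $\dim X\leq k$), producing a simplicial pair $(J',L')$ of dimension $\leq k+1$ together with maps $X\times I\leftrightarrows J'$ close to the identity, where $L'$ approximates $X\times\{0,1\}$ and carries the natural map to $X$ given by $gf$ at one end and $\operatorname{id}_X$ at the other. Applying the Theorem to this $(J',L')$, with $h\colon J'\to Y$ obtained from $H$ through the approximation, yields a lift $G\colon J'\to X$ extending the given map on $L'$; precomposing with $X\times I\to J'$ delivers the required $pf$-small homotopy $gf\simeq\operatorname{id}_X$.

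The only real work lies in bookkeeping the $(p,\epsilon)$ and $(pf,\epsilon)$ control through the nerve approximations: the covers of $Y$ and of $X\times I$ must be chosen not merely uniformly fine but $(p,\epsilon)$-fine (respectively $(pf,\epsilon)$-fine), and for the proper version of the result the covers must be locally finite with members of compact closure, so that $|N|$, $J'$, and the maps in and out of them remain proper. This is the only subtle point; once it is arranged, the two steps above combine to give the promised proper homotopy inverse $g$ with the prescribed control on both composition homotopies.
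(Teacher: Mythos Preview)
Your proposal is correct and is a reasonable fleshing-out of the paper's proof, which consists of the single phrase ``Routine mapping cylinder-nerve argument.'' Your two-step strategy---nerve-approximate $Y$ to produce $g$, then nerve-approximate $X\times I$ to produce the second homotopy---is exactly the sort of argument the hint suggests, and your attention to keeping the covers $(p,\epsilon)$-fine and locally finite with compact-closure members is the right way to secure the control and the proper versions.
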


\part{\bigskip}

\section{Additional topics\bigskip}

This part is not yet written. Topics to include: neighborhoods of a pair,
neighborhoods by restriction, Lickorish-Siebenmann Theorem for TRN's,
transversality (with discussion of Hudson's example), the group TOP.\bigskip

\end{document}